\newtheorem{theorem}{Theorem}[section]
\newtheorem{lemma}[theorem]{Lemma}
\newtheorem{corollary}[theorem]{Corollary}
\theoremstyle{definition}
\theoremstyle{remark}
\numberwithin{equation}{section}
\newcommand{\C}{\mathbb{C}}
\renewcommand{\epsilon}{\varepsilon}
\newcommand{\loc}{{\rm loc}}
\newcommand{\N}{\mathbb{N}}
\renewcommand{\phi}{\varphi}
\newcommand{\R}{\mathbb{R}}
\newcommand{\Z}{\mathbb{Z}}
\DeclareMathOperator{\im}{Im}
\DeclareMathOperator{\re}{Re}
\DeclareMathOperator{\supp}{supp}
\DeclareMathOperator{\sgn}{sgn}
\DeclareMathOperator{\Tr}{Tr}
\DeclareMathOperator{\tr}{Tr}
\renewcommand\Im{\hbox{{\rm Im}}\,}
\renewcommand\Re{\hbox{{\rm Re}}\,}
\newcommand{\abs}[1]{\lvert#1\rvert}
\newcommand{\norm}[1]{\lVert#1\rVert}
\newcommand{\jap}[1]{\langle#1\rangle}
\newcommand{\bbR}{{\mathbb R}}
\newcommand{\bbC}{{\mathbb C}}
\newcommand{\Sch}{\mathbf{S}}
\newcommand{\wt}{\widetilde}
\newcommand{\eps}{\varepsilon}
\newcommand{\1}{\mathbbm{1}}
\newcommand{\x}{\mathbf x}
\newcommand{\p}{\mathbf p}
\begin{document}

\title[Trace class conditions]{Trace class conditions for functions of Schr\"odinger operators}

\author{Rupert L. Frank}
\address{Rupert L. Frank, Mathematics 253-37, Caltech, Pasadena, CA 91125, USA}
\email{rlfrank@math.princeton.edu}

\author{Alexander Pushnitski}
\address{Alexander Pushnitski, Department of Mathematics, King's College London, Strand, London, WC2R 2LS, UK}
\email{alexander.pushnitski@kcl.ac.uk}

\subjclass[2010]{47A55,35J10}

\keywords{Besov space, Lieb--Thirring inequality, Schr\"odinger operator, limiting absorption principle}

\begin{abstract}
We consider the difference $f(-\Delta +V)-f(-\Delta)$ of functions of Schr\"odin\-ger operators in $L^2(\R^d)$ and provide conditions under which this difference is trace class. We are particularly interested in non-smooth functions $f$ and in $V$ belonging only to some $L^p$ space. This is motivated by applications in mathematical physics related to Lieb--Thirring inequalities. We show that in the particular case of Schr\"odinger operators the well-known sufficient conditions on $f$, based on a general operator theoretic result due to V.~Peller, can be considerably relaxed. We prove similar theorems for $f(-\Delta +V)-f(-\Delta)-\frac{d}{d\alpha} f(-\Delta +\alpha V)|_{\alpha=0}$. Our key idea is the use of the limiting absorption principle. 
\end{abstract}

\maketitle

\renewcommand{\thefootnote}{${}$} \footnotetext{\copyright\, 2013 by the authors. This paper may be reproduced, in its entirety, for non-commercial purposes.}

\section{Introduction and main results}\label{sec.a}

\subsection{Setting of the problem}\label{sec:setting}

In this paper we consider functions $f(H)$ and $f(H_0)$ of the perturbed and unperturbed Schr\"odinger operators
\begin{equation}
\label{1}
H= -\Delta +V \,,\qquad H_0 = -\Delta
\qquad
\text{in}\ L^2(\R^d)
\end{equation}
and we investigate which assumptions on the real-valued potential $V$ and on the function $f$ guarantee the property that
\begin{equation}
\label{eq:difftcintro}
f(H) - f(H_0) \in \mathbf S_1
\end{equation}
or
\begin{equation}
\label{eq:diff2tcintro}
f(H) - f(H_0) - \frac{d}{d\alpha} f((1-\alpha)H_0 + \alpha H)|_{\alpha=0} \in \mathbf S_1 \,,
\end{equation}
where $\mathbf S_1$ denotes the trace class. The potential $V$ will always be assumed infinitesimally form bounded with respect to $-\Delta$ and to decay (pointwise or in some $L^p$ sense) at infinity. 
We will be more specific below.

If $f$ is smooth, say, $f\in C_0^\infty(\bbR)$, and $V$ decays sufficiently rapidly at infinity, then \eqref{eq:difftcintro} and \eqref{eq:diff2tcintro} are certainly true and this can be proved by 
several standard methods. Here, we are mostly interested in functions $f$, which are absolutely continuous but not much smoother. This makes the question much more subtle.

There are at least two motivations for considering such $f$.
\begin{enumerate}
\item One of us (A. P.) studied the difference $f(H) - f(H_0)$ for functions $f$ with jump discontinuities 
\cite{Pu1,Pu2,PuYa}. Among other things, it was shown that for the function $f(\lambda)=\1_{(-\infty,a)}(\lambda)$ with $a>0$ the operator $f(-\Delta+V) - f(-\Delta)$ is never compact, unless scattering at energy $a$ is trivial. This naturally raises the question how the transition from non-compact to trace class occurs as the smoothness of $f$ increases.
\item One of us (R. F.) proved bounds on `something like' the trace of the left sides of \eqref{eq:difftcintro} and \eqref{eq:diff2tcintro} for the Lipschitz functions $f(\lambda) = (\lambda-a)_-$ with $a>0$ \cite{FLLS}. 
(Here and in what follows, $x_{\pm}=\max\{\pm x,0\}$.) 
The purpose of \cite{FLLS}  was achieved by introducing a certain regularised notion of trace, 
but the question, whether these operators are actually trace class, was left as an open problem. 
Lipschitz functions of this form arise naturally in a problem in mathematical physics related to 
Lieb--Thirring inequalities that we sketch in Subsection \ref{sec:dens}.
\end{enumerate}

One attempt to answer these questions is to look at abstract results in operator theory. The problem of giving sufficient conditions on functions $f$ such that the implications
\begin{equation}
H-H_0\in\Sch_1\quad \Rightarrow \quad f(H)-f(H_0)\in \Sch_1
\label{6}
\end{equation}
or
\begin{equation}
H-H_0\in\Sch_2\quad \Rightarrow \quad f(H)-f(H_0)- \frac{d}{ d\alpha} f((1-\alpha)H_0 + \alpha H)|_{\alpha=0} \in \Sch_1
\label{12}
\end{equation}
hold for an \emph{arbitrary} pair of self-adjoint operators $H$ and $H_0$ was considered in many works including, in particular, \cite{Krein, BS,Farforovskaya,Kato,Peller,Davies}. In \eqref{12}, $\Sch_2$ denotes the Hilbert--Schmidt class. (Of course, in the Schr\"odinger case $H-H_0$ is never compact, but one would like to apply these abstract results to the difference of (powers of) resolvents.) One of the sharpest sufficient conditions for \eqref{6} was obtained by V.~Peller in terms of Besov spaces $B^s_{p,q}(\R)$ whose definition we recall in Section \ref{sec.b}. In \cite{Peller} he showed that 
\begin{equation}
f\in B^1_{\infty,1}(\R) \text{ implies \eqref{6}}; 
\label{12a}
\end{equation}
for a precise statement, see Theorem \ref{thm.p1} below. The condition $f\in B^1_{\infty,1}(\bbR)$ is, roughly speaking, just a little stronger than the requirement $f'\in L^\infty(\bbR)$. Some necessary conditions for \eqref{6} are also known \cite{Peller}; for example, $f$ needs to be continuous, differentiable and satisfy $f'\in L^\infty_\loc(\bbR)$. In terms of the local behaviour of $f$, we get that the functions that behave like $f(\lambda)=(\lambda-a)_\pm^{\gamma}$ near $\lambda=a$ (and are smooth elsewhere) satisfy \eqref{6} if and only if $\gamma>1$. In particular, the function $f(\lambda) = (\lambda-a)_-$ that appears in the above problem in mathematical physics does \emph{not} fit into this abstract framework.


\subsection{Main results}

The main point of this paper is to show that for some 
\emph{particular pairs of operators $H_0$, $H$,} satisfying some 
standard assumptions of smooth and trace class scattering theory,
\emph{the class of admissible functions $f$ for the inclusion $f(H)-f(H_0)\in\Sch_1$ 
is much wider} and includes functions $f$
of the type
\begin{equation}
f_{\gamma,a}(\lambda)=(\lambda-a)^\gamma_-;
\quad a>0, 
\label{5} 
\end{equation}
\emph{for all $\gamma>0$.} 
We shall focus on the particular case of the Schr\"odinger operator, although the 
results could be extended to a much wider setting by using the language
of abstract scattering theory.

We focus on the local behaviour of $f$ on the continuous spectrum of $H_0$, i.e., on $[0,\infty)$. 
The question of the behaviour of $f$ at $+\infty$ and
near zero are of a very different nature, so in the following discussion
we will assume (most of the time) that $f$ is compactly supported on $(0,\infty)$. 

We start with the following preliminary result.

\begin{theorem}\label{thm0}
Let $d\geq 1$ and assume that $V$ satisfies the pointwise bound
\begin{equation}
\abs{V(\x)}\leq C(1+\abs{\x})^{-\rho}, 
\quad \x\in \bbR^d, \quad\text{for some}\  \rho>d,
\label{2}
\end{equation}
and some $C>0$. 
If $f\in B^1_{1,1}(\R)$ has compact support in $(0,\infty)$, then
$$
f(-\Delta+V) - f(-\Delta) \in\mathbf S_1 \,.
$$
This inclusion holds also for the functions \eqref{5} for any $\gamma>0$ and $a>0$.
\end{theorem}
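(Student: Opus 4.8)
The plan is to use the limiting absorption principle (LAP) for $H_0=-\Delta$, as advertised in the abstract, together with the Birman--Krein / stationary representation of the spectral shift to express $f(H)-f(H_0)$ as a norm-convergent integral of rank-controlled operators. Concretely, for $f$ compactly supported in $(0,\infty)$, write $f(\lambda)=\frac{1}{2\pi i}\int \bigl(R(\lambda+i0)-R(\lambda-i0)\bigr) f(\lambda)\,d\lambda$ in the sense of the spectral theorem, and subtract the corresponding expression for $H$. The difference of boundary values of resolvents, sandwiched by the decay weight $\jap{\x}^{-\rho/2}$, is Hilbert--Schmidt (indeed better) on the support of $f$ by standard LAP bounds on $[a',b']\subset(0,\infty)$; the factorisation $V=\jap{\x}^{-\rho/2}\cdot\jap{\x}^{\rho/2}V^{1/2}\cdots$ using \eqref{2} turns the resolvent identity $R(z)-R_0(z)=-R_0(z)VR(z)$ into a product of two Hilbert--Schmidt factors times a bounded middle factor, giving a trace-class integrand with an $L^1$-in-$\lambda$ bound on any compact subinterval of $(0,\infty)$. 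Integrating in $\lambda$ then yields $f(H)-f(H_0)\in\Sch_1$ once $f\in B^1_{1,1}$ controls the requisite $\lambda$-regularity of the integrand (the $B^1_{1,1}$ norm is exactly what is needed to absorb one derivative hitting the resolvent difference in the integration by parts).

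For the first assertion, the steps in order are: (i) reduce to $f$ smooth and compactly supported in $(0,\infty)$ by a density argument, using that $\Sch_1$ is complete and that the $B^1_{1,1}$ norm dominates the relevant operator norm of $f(H)-f(H_0)$ — this is where the full strength of $B^1_{1,1}$, rather than just $f'\in L^\infty$, is consumed; (ii) for such smooth $f$, use the double-resolvent or Birman--Krein stationary formula to write $f(H)-f(H_0)=\int_a^b f'(\lambda)\,\bigl[\text{(difference of spectral projections-type kernel)}\bigr]\,d\lambda$; (iii) factor through $V$ and invoke the LAP bound $\sup_{\eps>0}\norm{\jap{\x}^{-s}R_0(\lambda\pm i\eps)\jap{\x}^{-s}}_{\Sch_2}\leq C(\lambda)$ valid for $s>\max\{1,d/4\}$ and $\lambda$ in a compact subset of $(0,\infty)$, together with $\rho>d$ ensuring $\jap{\x}^{-\rho/2}$ is small enough; (iv) conclude the integrand is trace class with integrable norm and integrate. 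The hypothesis $\rho>d$ (rather than $\rho>d/2$, say) is there to split $V$ into two $\jap{\x}^{-\rho/2}$-weights each of which combines with a resolvent to form a Hilbert--Schmidt operator in dimension $d$.

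For the second assertion — the functions $f_{\gamma,a}(\lambda)=(\lambda-a)_-^\gamma$ with $\gamma>0$, $a>0$ — the point is that these are \emph{not} in $B^1_{1,1}$ when $0<\gamma\le 1$, so the first part does not apply directly. Here I would multiply $f_{\gamma,a}$ by a smooth cutoff $\chi$ equal to $1$ near $a$ and supported in $(0,\infty)$: the function $(1-\chi)f_{\gamma,a}$ is smooth with compact support in $(0,\infty)$ (and hence covered by the first part), so it remains to treat $g:=\chi\cdot(\lambda-a)_-^\gamma$, which is supported near $a$, vanishes at $a$, and has a one-sided singularity in its derivatives of order $\gamma$. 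For this I would exploit the \emph{extra} regularity that the LAP gives: the boundary values $R_0(\lambda\pm i0)$, weighted, are not merely bounded but H\"older continuous in $\lambda\in(0,\infty)$ with any exponent $<\min\{1,\ldots\}$ — more precisely one has fractional-derivative (Besov) bounds $\jap{\x}^{-s}R_0(\lambda+i0)\jap{\x}^{-s}\in B^{\theta}_{\cdot}$ locally for suitable $\theta$ — and pair this against the fact that $g$ lies in a Besov space $B^{\gamma}_{1,1}$ (localised) whose order $\gamma$ can be taken arbitrarily small but positive. A straightforward duality/interpolation between "one derivative costs $B^1_{1,1}$" and "zero derivatives is easy" then shows that $g(H)-g(H_0)\in\Sch_1$ as soon as the combined regularity budget $\gamma+\theta>1$ is met, and since the LAP furnishes $\theta$ arbitrarily close to $1$ on compact subsets of $(0,\infty)$ (the energy $a$ being a fixed interior point, away from $0$ and $\infty$), any $\gamma>0$ suffices.

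The main obstacle I expect is step (iii)–(iv) done \emph{uniformly and integrably in $\lambda$}: one must be careful that the LAP constants $C(\lambda)$ do not blow up on the compact spectral interval — they do not, since we stay strictly inside $(0,\infty)$ where $H_0$ is "nice" and, after checking that $H$ has no embedded eigenvalues or singular continuous spectrum there (which follows from the LAP for $H$ itself, a consequence of \eqref{2} and $\rho>d\ge 1>1$), the perturbed resolvent boundary values are likewise bounded — and that the number of $\lambda$-derivatives one is allowed to integrate by parts matches exactly the Besov order available. The interplay "Besov order of $f$ plus H\"older order from LAP must exceed $1$" is the quantitative heart of the argument, and getting the fractional version of it clean (rather than just the integer case $f\in B^1_{1,1}$) is the delicate part for the $(\lambda-a)_-^\gamma$ statement with small $\gamma$.
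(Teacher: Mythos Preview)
Your proposal rests on a factual error. You assert that $f_{\gamma,a}(\lambda)=(\lambda-a)_-^\gamma$, localised near $a$, is \emph{not} in $B^1_{1,1}$ for $0<\gamma\le 1$, and then build an interpolation scheme using H\"older regularity of boundary resolvents to compensate. But the claim is false: a compactly supported function with a one-sided singularity $(\lambda-a)_-^\gamma$ lies in $B^1_{1,1}(\bbR)$ for every $\gamma>0$. Indeed $\norm{\Delta_t^2 g}_{L^1}\asymp\abs{t}^{\gamma+1}$ for small $t$, so $\int_0^1\norm{\Delta_t^2 g}_{L^1}\abs{t}^{-2}\,dt<\infty$ exactly when $\gamma>0$. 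You are confusing $B^1_{1,1}$ with $B^1_{\infty,1}$, where the threshold is $\gamma>1$; this distinction is the entire point of the theorem, and the paper says so explicitly. Consequently the second assertion follows from the first by the trivial splitting $f_{\gamma,a}=f_0+f_1$ with $f_0\in B^1_{1,1}$ compactly supported in $(0,\infty)$ and $f_1$ smooth and supported below the spectra of both operators. No fractional-regularity argument is needed.

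For the first assertion, your real-axis boundary-value approach is different from the paper's and, as written, has gaps. Your claimed uniform bound $\sup_{\eps>0}\norm{\jap{\x}^{-s}R_0(\lambda\pm i\eps)\jap{\x}^{-s}}_{\Sch_2}\le C(\lambda)$ fails for $d\ge 4$ regardless of $s$, since the free Green's function has local singularity $\abs{\x-\x'}^{-(d-2)}$ whose square is not locally integrable; and you never make precise how the $B^1_{1,1}$ norm enters (``absorb one derivative'' is not an argument). The paper instead works over the complex plane via the Helffer--Sj\"ostrand/Dynkin formula $f(H)-f(H_0)=-\pi^{-1}\int_{\bbC}\omega(z)R(z)VR_0(z)\,dx\,dy$. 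The key is that the Hilbert--Schmidt bound $\norm{\sqrt{\abs{V}}\,R_0(z)}_{\Sch_2}^2\le C\abs{\Im z}^{-1}$ (blowing up, not uniform) combines with the \emph{operator-norm} limiting absorption principle for the sandwiched resolvent to give $\norm{R(z)VR_0(z)}_{\Sch_1}\le C\abs{\Im z}^{-1}$, and Dynkin's theorem characterises $B^1_{1,1}$ precisely by $\int_{\bbC}\abs{\omega(z)}\abs{\Im z}^{-1}\,dx\,dy<\infty$. That exact match between the $\abs{\Im z}^{-1}$ blow-up and the Besov condition is the mechanism; your sketch does not locate it.
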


In other words, the assumption $f\in B^1_{\infty,1}(\R)$ from abstract theory (see \eqref{12a}) can be replaced 
by the assumption $f\in B^1_{1,1}(\R)$ which in the model case \eqref{5} lowers the requirement 
on the exponent from $\gamma>1$ to $\gamma>0$.

We note that although the function $f_{\gamma,a}$ in \eqref{5} for $\gamma>0$ is, strictly
speaking, not in the class $B^1_{1,1}(\R)$ because of its growth at minus infinity, one can
easily write it as $f_{\gamma,a}=f_0+f_1$, where $f_0\in B^1_{1,1}(\R)$ and $f_1$ vanishes
on the spectra of the operators $-\Delta+V$ and $-\Delta$.

Further, we are able to replace pointwise condition \eqref{2} by more general 
$L^p$ conditions. 
Our main result is the following.

\begin{theorem}\label{main}
Let $d\geq 1$ and 
\begin{align*}
& V \in L^1(\R) & \text{if}\ d=1 \,, \\
& V \in L^1(\R^2)\cap L^p(\R^2) \ \text{for some}\ p>1 & \text{if}\ d=2\,,\\
& V\in L^1(\R^3)\cap L^{3/2}(\R^3) & \text{if}\ d=3 \,,\\
& V\in \ell^1(L^2) \cap L^{d/2}(\R^d) & \text{if}\ d\geq 4 \,.
\end{align*}
If $f\in B^1_{1,1}(\R)$ has compact support in $(0,\infty)$, then
$$
f(-\Delta+V) - f(-\Delta) \in\mathbf S_1 \,.
$$
This inclusion holds also for the functions \eqref{5} for any $\gamma>0$ and $a>0$.
\end{theorem}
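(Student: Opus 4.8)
The plan is to reduce Theorem~\ref{main} to Theorem~\ref{thm0} by an approximation argument combined with a uniform trace-norm bound. The key observation is that Theorem~\ref{thm0} already gives the result for rapidly decaying potentials, and the $L^p$-type hypotheses in Theorem~\ref{main} are exactly those under which the relevant resolvent-type operators (and their limiting boundary values) remain bounded. So the scheme is: (i) establish a quantitative version of Theorem~\ref{thm0}, namely an estimate of the form $\norm{f(-\Delta+V)-f(-\Delta)}_{\Sch_1}\leq C(f)\,N(V)$, where $N(V)$ is a norm controlled by the quantities appearing in the hypotheses of Theorem~\ref{main} (for $d\geq 4$, the $\ell^1(L^2)\cap L^{d/2}$ norm, etc.); (ii) given $V$ as in Theorem~\ref{main}, pick a sequence $V_n$ of, say, compactly supported bounded potentials (hence satisfying \eqref{2}) with $N(V-V_n)\to 0$; (iii) pass to the limit.

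First I would set up the quantitative estimate. Following the stated key idea of the paper --- the limiting absorption principle --- one writes $f(-\Delta+V)-f(-\Delta)$ via a representation involving the boundary values of the resolvents $R_0(\lambda+i0)=(-\Delta-\lambda-i0)^{-1}$ and $R(\lambda+i0)$ on the continuous spectrum, weighted against $f'$ (this is where the Besov regularity $f\in B^1_{1,1}$ enters: it controls the relevant integral of $f'$ against operator-valued kernels). The core input is that the sandwiched resolvent $\abs{V}^{1/2}R_0(\lambda+i0)\abs{V}^{1/2}$ and related factored operators are Hilbert--Schmidt (or trace class after pairing two such factors), with norms bounded by $N(V)$ uniformly for $\lambda$ in the compact support of $f$ away from $0$. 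The precise $L^p$ conditions --- $L^1\cap L^{d/2}$ for $d\geq 3$, with the endpoint $\ell^1(L^2)$ replacing $L^{d/2}$ when $d\geq 4$ --- are precisely the ones for which such limiting-absorption/Birman--Schwinger bounds are known to hold; I would invoke the standard Agmon--Kato--Kuroda machinery (or the Kenig--Ruiz--Sogge / Goldberg--Schlag uniform Sobolev estimates in $d\geq 3$) to get these bounds with explicit dependence on $N(V)$. One must be careful that the bound is \emph{linear} (or at worst controlled) in an appropriate norm of $V$ on the difference $V-V_n$, since that is what drives the approximation; this typically works because the difference $f(H)-f(H_0)$, when expanded, is affine-plus-higher-order in $V$, and each term carries at least one factor of $V$ in a controllable norm.

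The approximation step itself is then routine: $C_c^\infty$ (or even bounded compactly supported) functions are dense in each of the spaces listed ($L^1(\R)$, $L^1\cap L^p(\R^2)$, $L^1\cap L^{3/2}(\R^3)$, $\ell^1(L^2)\cap L^{d/2}(\R^d)$), so choose $V_n\to V$ in the relevant norm. Since $V_n$ satisfies \eqref{2}, Theorem~\ref{thm0} gives $f(-\Delta+V_n)-f(-\Delta)\in\Sch_1$; the quantitative bound from step~(i), applied to the ``two-potential'' difference, shows $\{f(-\Delta+V_n)-f(-\Delta)\}_n$ is Cauchy in $\Sch_1$. Combined with norm-resolvent convergence $-\Delta+V_n\to-\Delta+V$ (which follows from infinitesimal form-boundedness and $V_n\to V$), the $\Sch_1$ limit must be $f(-\Delta+V)-f(-\Delta)$, giving the claim for $f\in B^1_{1,1}$ with compact support in $(0,\infty)$. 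The statement for $f_{\gamma,a}(\lambda)=(\lambda-a)^\gamma_-$ follows by the same splitting $f_{\gamma,a}=f_0+f_1$ noted after Theorem~\ref{thm0}: $f_0\in B^1_{1,1}$ and $f_1$ vanishes on $\spec(-\Delta+V)\cup\spec(-\Delta)\subset[-c,\infty)$ for the appropriate $c$ depending on the form bound, so $f_1(-\Delta+V)=f_1(-\Delta)=0$.

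I expect the main obstacle to be step~(i): proving the quantitative trace-class bound with the \emph{correct} dependence on $V$, especially at the endpoints. The delicate points are (a) the low-dimensional cases $d=1,2$, where the resolvent $R_0(\lambda+i0)$ has a strong singularity in the spectral parameter near $0$ and one cannot use the clean uniform Sobolev bounds --- here the compact support of $f$ away from $0$ is essential, and one needs the $L^1$ condition to handle the weak decay of the free resolvent kernel; (b) the $d\geq 4$ case, where $L^{d/2}$ alone is too weak for trace-ideal estimates and the extra $\ell^1(L^2)$ condition must be used to gain a Hilbert--Schmidt factorization (this is the standard phenomenon that in high dimensions one needs local $L^2$ control, not just $L^{d/2}$); and (c) ensuring that the limiting absorption principle holds for the \emph{perturbed} operator $-\Delta+V$ uniformly along the approximating sequence, which requires ruling out embedded eigenvalues and zero-energy resonances on the support of $f$ --- generically true, but needing care, and presumably handled by the smallness of the support combined with the absence of positive eigenvalues for these potential classes.
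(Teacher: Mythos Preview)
Your scheme is workable in outline, but it is a detour compared with what the paper does, and step~(i) in fact swallows the whole proof.

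The paper does \emph{not} argue by approximation from Theorem~\ref{thm0}. It proves the result directly for $V$ in the stated $L^p$-type classes, using Dynkin's characterisation of $B^1_{1,1}$ (Theorem~\ref{thm.dynkin}) to represent
\[
f(H)-f(H_0)=\frac1\pi\int_{\bbC}\omega(z)\bigl(R(z)-R_0(z)\bigr)\,dx\,dy,
\]
and then bounding the integrand in trace norm via the iterated resolvent identity $R-R_0=R_0\sqrt{|V|}(1+\sqrt{V}R\sqrt{|V|})\sqrt{V}R_0$ together with two ingredients: a Hilbert--Schmidt bound $\|\sqrt{|V|}R_0(z)R_0(-E)^\kappa\|_{\Sch_2}^2\leq C|\Im z|^{-1}$ (Lemma~\ref{hs}) and an $L^p$-form limiting absorption principle $\sup\|\sqrt{|V|}R(z)\sqrt{|V|}\|<\infty$ (Lemma~\ref{lap}, via Ionescu--Schlag/Koch--Tataru). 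Combined, these give $\|R_0(-E)^\kappa(R(z)-R_0(z))R_0(-E)^\kappa\|_{\Sch_1}\leq C|\Im z|^{-1}$ (Corollary~\ref{lma}), and the Dynkin condition \eqref{15} with $s=1$ makes the $z$-integral converge. No approximation in $V$ is needed.

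The point is that your step~(i) --- a quantitative bound with constant depending only on the $L^p$/$\ell^1(L^2)$ norms $N(V)$ --- cannot be extracted from Theorem~\ref{thm0} as stated (its constant depends on the pointwise decay rate $\rho$, not on $N(V)$). To prove step~(i) you must establish precisely the $L^p$-LAP of Lemma~\ref{lap} and the $\Sch_2$ bound of Lemma~\ref{hs}; but once you have those, they apply to $V$ itself and the approximation in steps~(ii)--(iii) is superfluous. Your route also incurs extra work you flag yourself: continuity of $V\mapsto f(H_V)-f(H_0)$ in $\Sch_1$ (nonlinear through the perturbed resolvent) and uniform LAP along the sequence.

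Your discussion of $d\geq4$ is too vague to constitute a proof. The obstruction there is that $R_0(z)\sqrt{|V|}\notin\Sch_2$, so one cannot simply ``gain a Hilbert--Schmidt factorisation'' from $\ell^1(L^2)$. The paper's device is different: set $g(\lambda)=(\lambda+E)^{2k}f(\lambda)$ with $k>d/2-1$, write $f(H)-f(H_0)$ as a sum of three terms involving $R(-E)^k$, $R_0(-E)^k$, and $g$, use the Birman--Solomyak/Reed--Simon fact that $R(-E)^k-R_0(-E)^k\in\Sch_1$ under $V\in\ell^1(L^2)$ (Lemma~\ref{diff}) for two of the terms, and apply Corollary~\ref{lma} with $\kappa=k$ to the remaining term $R_0(-E)^k(g(H)-g(H_0))R_0(-E)^k$. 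The $\ell^1(L^2)$ condition enters only through Lemma~\ref{diff}, not through a factorisation of the boundary resolvent.
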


Of course, Theorem~\ref{thm0} follows from Theorem~\ref{main}.
Note that $-\Delta +V$ can be defined via a quadratic form with form domain $H^1(\R^d)$ if $V$ satisfies the assumptions of Theorem~\ref{main}. We also recall that the space $\ell^1(L^2)$ that appears in the above theorem is defined by the requirement that
$$
\sum_{n\in\Z^d} \left( \int_{Q_n} |V|^2 \,dx\right)^{1/2} \,,
\qquad Q_n = n+(-1/2,1/2)^d \,,
$$
is finite. It is easy to see that $\ell^1(L^2)\subset L^1\cap L^2$ and that
$$
\left\{ V: (1+|x|)^{\sigma} V \in L^2 \right\} \subset \ell^1(L^2)
\qquad\text{if}\ \sigma>d/2 \,.
$$

Our second main result concerns the inclusion \eqref{eq:diff2tcintro}. Again there is an abstract result of Peller
\cite{Peller2} (motivated by earlier work of L. Koplienko \cite{Koplienko}) which proves \eqref{12} for $f\in B^2_{\infty,1}(\R)$; see Theorem \ref{thm.p2} below. The requirement $f\in B^2_{\infty,1}(\bbR)$ is, roughly speaking, just a 
little stronger than $f''\in L^\infty(\bbR)$. In particular, it is easy to see that functions $f$ with local singularities $f(\lambda)=(\lambda-a)_\pm^\gamma$ are admissible if and only if $\gamma>2$. Again, it turns out that for Schr\"odinger operators this holds under considerably weaker regularity conditions.
We shall prove

\begin{theorem}\label{thm2}
Let $d=1,2,3$ and $V\in L^2(\R^d)$. If $f\in B^2_{1,1}(\R)$ has compact support in $(0,\infty)$, then
$$
f(-\Delta+V) - f(-\Delta) - \frac{d}{d\alpha} f(-\Delta +\alpha V)|_{\alpha=0} \in\mathbf S_1 \,.
$$
This inclusion holds also for the functions \eqref{5} for any $\gamma>1$ and $a>0$.
\end{theorem}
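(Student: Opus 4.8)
\textbf{Proof plan for Theorem \ref{thm2}.}

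The plan is to reduce the second-order trace class statement to a perturbation series analysis built on the limiting absorption principle, paralleling the strategy used for Theorem \ref{main} but carrying the expansion one order further. First I would write the operator in question as
\begin{equation*}
f(H) - f(H_0) - \frac{d}{d\alpha} f(H_0 + \alpha V)\big|_{\alpha=0}
= \frac{1}{2\pi i} \int_{\Gamma} f(z) \left[ (H-z)^{-1} - (H_0-z)^{-1} - R_0(z) V R_0(z) \right] dz,
\end{equation*}
using a Cauchy-type representation over a contour $\Gamma$ encircling the support of $f$, with $R_0(z) = (H_0-z)^{-1}$. The bracketed resolvent expression equals $R_0(z) V R_0(z) V R(z)$ where $R(z) = (H-z)^{-1}$, i.e.\ the second-order remainder of the Born series. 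So the whole problem becomes one of showing that the $z$-integral of $f(z)\, R_0(z) V R_0(z) V R(z)$ converges in $\mathbf S_1$.

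The key step is to factor $V = |V|^{1/2} \sgn(V) |V|^{1/2}$ and insert these factors so that the operator becomes a product of terms of the form $|V|^{1/2} R_0(z) |V|^{1/2}$ and $|V|^{1/2} R(z) |V|^{1/2}$, controlled on the relevant part of the contour (close to $[0,\infty)$) by the limiting absorption principle. Because $V \in L^2(\R^d)$ for $d \le 3$, the operators $|V|^{1/2} R_0(\lambda \pm i0) |V|^{1/2}$ are not merely bounded but Hilbert--Schmidt with norm locally bounded (and Hölder continuous) in $\lambda$ — this is the standard Agmon--Kato--Kuroda estimate, using that the free resolvent kernel is $L^2_{\rm loc}$ precisely in dimensions $\le 3$. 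With three resolvent factors and two copies of $V$, one has at least two Hilbert--Schmidt factors available, hence a trace class product, with a bound integrable against $f$ provided $f$ has enough smoothness near $[0,\infty)$; the condition $f \in B^2_{1,1}(\R)$ is exactly what is needed to absorb the quadratic blow-up of $\|R_0(z)\|$-type terms as $z \to [0,\infty)$ after deforming the contour, the extra derivative compared to Theorem \ref{main} matching the extra resolvent factor. One then localizes: since $f$ has compact support in $(0,\infty)$, the contribution near zero energy is harmless, and away from the contour the integrand is trace class with rapidly decaying bound.

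The main obstacle I expect is the careful bookkeeping at the endpoints of the contour where it approaches $[0,\infty)$: one must justify the contour deformation onto the real axis, control the boundary values $R_0(\lambda + i0)$, $R(\lambda+i0)$ in the appropriate weighted spaces uniformly on compact $\lambda$-intervals, and verify that the resulting integral $\int_{\R} |f''(\lambda)| \cdot (\text{locally bounded } \mathbf S_1\text{-valued function}) \, d\lambda$ converges — which is where the Besov regularity $B^2_{1,1}$ is consumed rather than $B^1_{1,1}$. For the explicit functions $f_{\gamma,a}(\lambda) = (\lambda - a)_-^\gamma$ with $\gamma > 1$, the point is that after subtracting a smooth piece supported away from the spectrum, the remaining function lies in $B^2_{1,1}(\R)$ exactly when $\gamma > 1$ — the threshold drops from $\gamma > 2$ (abstract theory) to $\gamma > 1$ because the Schrödinger-specific Hilbert--Schmidt gain in the resolvent estimates buys one full order of smoothness. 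I would also need to confirm that the term $\frac{d}{d\alpha} f(H_0 + \alpha V)|_{\alpha=0}$ is well-defined as claimed, e.g.\ via the Du Hamel formula $\int_{\R} f'(\lambda)\, dE_{H_0}(\lambda) V$ interpreted appropriately; this is standard once $V \in L^2$ and $f$ is smooth enough on the spectrum.
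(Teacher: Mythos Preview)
The central estimate in your plan --- that $|V|^{1/2} R_0(\lambda \pm i0) |V|^{1/2}$ is Hilbert--Schmidt, with norm locally bounded in $\lambda$, whenever $V \in L^2(\R^d)$ for $d \leq 3$ --- is false. Writing out the $\Sch_2$-norm via the explicit free Green's function, one finds for $d=3$ that
\[
\bigl\||V|^{1/2} R_0(\lambda+i0) |V|^{1/2}\bigr\|_{\Sch_2}^2
= \frac{1}{16\pi^2} \iint \frac{|V(\x)|\,|V(\x')|}{|\x-\x'|^2}\, d\x\, d\x',
\]
which by Hardy--Littlewood--Sobolev is controlled by $\|V\|_{L^{3/2}}^2$ but not by $\|V\|_{L^2}^2$; in $d=1$ the boundary-value kernel is bounded, so the $\Sch_2$-norm squared is a constant times $\|V\|_{L^1}^2$, again not finite for general $V\in L^2(\R)$. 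Thus the mechanism of producing two boundary-value Hilbert--Schmidt factors does not work under the hypothesis of the theorem. (A related issue: a genuine Cauchy contour integral representation of $f(H)$ requires $f$ analytic near the spectrum, which $f\in B^2_{1,1}$ is not; the relevant tool is the almost-analytic extension, which gives an area integral over $\bbC$, not a contour integral.)

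The paper's argument is simpler and structurally different: it does \emph{not} pass to boundary values and does not invoke the limiting absorption principle for this theorem at all. Using Dynkin's representation one writes
\[
f(H)-f(H_0)-\tfrac{d}{d\alpha}f(H_0+\alpha V)\big|_{\alpha=0}
= \frac1\pi\int_{\bbC}\omega(z)\, R_0(z) V R(z) V R_0(z)\, dx\, dy,
\]
and then the elementary bound $\|V R_0(z)\|_{\Sch_2}^2 \leq C\,|\Im z|^{-1}\|V\|_{L^2}^2$ of Lemma~\ref{hs} (with $\kappa=0$, available since $d\leq 3$), together with the trivial $\|R(z)\|\leq|\Im z|^{-1}$, yields $\|R_0 V R V R_0\|_{\Sch_1}\leq C|\Im z|^{-2}$. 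The condition $f\in B^2_{1,1}$ enters through Dynkin's characterisation $\int_{\bbC}|\omega(z)|\,|\Im z|^{-2}\,dx\,dy<\infty$, which exactly absorbs this blow-up. Your intuition that $B^2_{1,1}$ matches a quadratic singularity is correct, but the singularity lives in $\Im z$ off the real axis, and one accepts a controlled divergence of $\|V R_0(z)\|_{\Sch_2}$ as $\Im z\to 0$ rather than demanding a uniform bound at the boundary.
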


In order to keep the paper reasonably short and elementary we have proved this only for dimensions $d\leq 3$. We expect that a similar theorem holds in general dimensions.

The method that we introduce in this paper not only allows to prove Theorems \ref{main} and \ref{thm2}, but also provides a short alternative proof of both theorems of Peller that were mentioned before; see Theorems \ref{thm.p1} and \ref{thm.p2} below. We also point out that the same method allows one to obtain analogues of Theorems \ref{main} and \ref{thm2} for Schatten classes $\mathbf S_p$, $p>1$, under slightly different assumptions on $V$.



\subsection{Motivation from mathematical physics}\label{sec:dens}

Many challenging problems in mathematical physics are related to understanding the quantum many-body problem. One of the approaches that has been successfully employed in some limiting regimes is to approximate the Hamiltonian of the many-body system by a one-body Schr\"odinger operator $-\Delta+V$ with an effective potential $V$. If the particles are fermions, the ground state energy is then given (up to spin degeneracies) by the sum of the lowest eigenvalues of $-\Delta+V$. If there is no restriction on the number of particles, this sum is at least $-\tr(-\Delta+V)_-$. The mathematical tool both for estimating the latter quantity and for justifying the approximation by a one-body Schr\"odinger operator is the Lieb--Thirring inequality \cite{LT},
\begin{equation}
\tr (-\Delta+V)_-^\gamma \leq L_{\gamma,d} \int_{\R^d} V_-^{\gamma+d/2}(\x)\,d\x
\label{lt1}
\end{equation}
with $\gamma=1$. Here the constant $L_{\gamma,d}$ is independent of $V$. It is useful and interesting to study the above inequality also for different values of $\gamma$ and we refer to \cite{LW2,H2} for reviews of the field and precise statements.

Above we were assuming that the number of particles is negligible with respect to the size of the system. However, for instance in solids the number of particles is proportional to the volume and in this case the energy of the system is approximated by $-\tr(-\Delta+V-\mu)_-$ for a positive constant $\mu$ (the chemical potential). While $-\tr(-\Delta+V-\mu)_-$ is finite if the Schr\"odinger operator is considered on a bounded domain, a regularization is needed in order to treat the problem on the whole space. Formally, one subtracts $-\tr(-\Delta-\mu)_-$, which is interpreted as the total energy of the background. The question whether analogues of the Lieb--Thirring inequality extend to this situation has been considered only recently in \cite{FLLS}; see also \cite{FLLS2}. While the natural definition of a relative energy is $-\tr\left((-\Delta+V-\mu)_--(-\Delta-\mu)_-\right)$, a regularized definition was used in \cite{FLLS} in order to avoid discussing the trace class properties of $(-\Delta+V-\mu)_--(-\Delta-\mu)_-$.
(This regularization also avoids having $V\in L^1(\R^d)$, although this
will not be important for us here.)

Using our Theorem \ref{main} and a key estimate from \cite{FLLS},
we are able to prove this bound without any regularization. 
We denote
$$
L_{\gamma,d}^{\mathrm{sc}} = \int_{\R^d} (|\p|^2-1)_-^{\gamma} \frac{d\p}{(2\pi)^d}
$$
(``sc'' stands for semiclassical); this is the constant that one expects in \eqref{lt1} from semiclassical phase space considerations. 
\begin{theorem}\label{lt}
Let $d\geq 2$. 
Then there is a constant $L_{1,d}$ such that for all $\mu\in\R$ and all 
$V\in L^1(\R^d)\cap L^{1+d/2}(\R^d)$ (with $V\in \ell^1(L^2)$ if $d\geq 4$)
one has
\begin{align*}
0 & \leq \tr\left( \left(-\Delta+V-\mu\right)_- - \left(-\Delta -\mu \right)_- \right) 
+ L_{0,d}^{\mathrm{sc}} \ \mu_+^{d/2} \int_{\R^d} V(\x) \,d\x \\
& \leq L_{1,d} \int_{\R^d} \left( \left(V(\x)-\mu\right)_-^{1+d/2} - \mu_+^{1+d/2} + 
\left(1+\frac d2\right) \mu_+^{d/2} V(\x) \right) d\x.
\end{align*}
\end{theorem}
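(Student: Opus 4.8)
The plan is to combine Theorem \ref{main} (which makes the difference a genuine trace class operator) with the variational/spectral-shift machinery and a pointwise phase-space estimate from \cite{FLLS}. First I would reduce to the case $\mu\geq 0$: for $\mu<0$ the operator $-\Delta-\mu$ is strictly positive, $(-\Delta-\mu)_-=0$, and both $\mu_+$ terms vanish, so the claim collapses to the statement that $0\leq\tr(-\Delta+V-\mu)_-\leq L_{1,d}\int (V-\mu)_-^{1+d/2}$, which is the ordinary Lieb--Thirring inequality \eqref{lt1} with $\gamma=1$ applied to the potential $V-\mu$; no regularization is needed since that trace is manifestly finite. So from now on assume $\mu\geq 0$.

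Next I would make sense of the left-hand side. Apply Theorem \ref{main} with the potential $V-\mu$ replaced by... actually with the \emph{same} $V$ but operators shifted by $\mu$: the function $f(\lambda)=(\lambda-\mu)_-$ equals $f_{1,\mu}$ of \eqref{5} (after the harmless splitting noted after Theorem \ref{thm0} to kill the growth at $-\infty$, which is irrelevant since the spectra are bounded below), and $V$ satisfies the hypotheses of Theorem \ref{main} for $d\geq 2$ exactly under the stated integrability assumptions $V\in L^1\cap L^{1+d/2}$ (with $V\in\ell^1(L^2)$ if $d\geq 4$); note $L^{1+d/2}\cap L^1\subset L^{d/2}$ for $d\geq 2$ and $L^{1+d/2}\subset L^p$ locally for some $p>1$ when $d=2$ and $\subset L^{3/2}$ when $d=3$. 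Hence $A:=(-\Delta+V-\mu)_--(-\Delta-\mu)_-\in\mathbf S_1$ and $\tr A$ is well defined.

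The heart of the argument is to produce the two-sided bound on $\tr A$. For the \emph{lower} bound I would use the Birman--Schwinger / variational principle: since $x\mapsto x_-$ is convex, for any self-adjoint $H,H_0$ with $H-H_0$ form-compact one has $\tr(H_-)-\tr((H_0)_-)\geq -\tr\big(\1_{(-\infty,0)}(H_0)(H-H_0)\big)$ (pushing the linear term to the unperturbed side), which with $H=-\Delta+V-\mu$, $H_0=-\Delta-\mu$ gives $\tr A\geq -\tr\big(\1_{(-\infty,0)}(-\Delta-\mu)\,V\big)=-L_{0,d}^{\mathrm{sc}}\,\mu_+^{d/2}\int V$, the first displayed inequality; here $\1_{(-\infty,0)}(-\Delta-\mu)$ is the Fourier projection onto $\{|\p|^2<\mu\}$ and its trace-per-unit-action against $V$ is computed explicitly, yielding the constant $L_{0,d}^{\mathrm{sc}}$. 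For the \emph{upper} bound I would invoke the key pointwise estimate of \cite{FLLS}: the regularized trace studied there, call it $\widetilde\tr\,A$, satisfies exactly the claimed upper bound $\widetilde\tr\,A\leq L_{1,d}\int\big((V-\mu)_-^{1+d/2}-\mu_+^{1+d/2}+(1+\tfrac d2)\mu_+^{d/2}V\big)$, and the only remaining task is to identify $\widetilde\tr\,A$ with $\tr A+L_{0,d}^{\mathrm{sc}}\mu_+^{d/2}\int V$, i.e. to show that the regularization used in \cite{FLLS} amounts precisely to subtracting the first-order term $\tr\big(\1_{(-\infty,0)}(-\Delta-\mu)V\big)$. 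This identification is legitimate now that we know $A\in\mathbf S_1$: one approximates $V$ by nice compactly supported potentials $V_n$ for which everything is classical, checks $\tr A_n\to\tr A$ (using $\mathbf S_1$-continuity from Theorem \ref{main}'s proof, or a spectral-shift-function argument) and $\widetilde\tr\,A_n\to\widetilde\tr\,A$, and passes to the limit.

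The main obstacle I anticipate is precisely this last identification — matching the abstract $\mathbf S_1$-trace of Theorem \ref{main} with the concrete regularized quantity of \cite{FLLS} and controlling the first-order term $\tr(\1_{(-\infty,0)}(-\Delta-\mu)V)$ (which requires $V\in L^1$, the reason that hypothesis appears). One must be careful that $\1_{(-\infty,0)}(-\Delta-\mu)V$ is only trace class, not obviously so for rough $V$, and that the subtraction is the \emph{same} one in both formalisms; a density argument in $V$, together with lower semicontinuity of $\tr((\cdot)_-)$ for the lower bound and the explicit formula for the upper bound, should close the gap. The convexity inequality for the lower bound and the phase-space computation of $L_{0,d}^{\mathrm{sc}}$ are routine.
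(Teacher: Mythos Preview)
Your overall plan matches the paper's: reduce to $\mu>0$, invoke Theorem~\ref{main} to put $(-\Delta+V-\mu)_- - (-\Delta-\mu)_-$ into $\mathbf S_1$, and then connect to the estimate from \cite{FLLS}. The difference lies in how the step you correctly flag as the main obstacle---identifying the genuine trace with the regularised quantity of \cite{FLLS}---is carried out.

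The paper uses neither a Klein-type convexity inequality for the lower bound nor a density argument for the identification. Instead it proves a short abstract identity (Lemma~\ref{tracelemma}): for self-adjoint $A,B$ with $P=\1_{\{A<0\}}$, $Q=\1_{\{B<0\}}$, once $P(B-A)P\in\mathbf S_1$ and $B_- - A_-\in\mathbf S_1$ one has
\[
\tr(B_- - A_-) \;+\; \tr P(B-A)P \;=\; \bigl\|(Q-P)\,|B|^{1/2}\bigr\|_{\mathbf S_2}^2 \,.
\]
Applied with $A=-\Delta-\mu$ and $B=-\Delta+V-\mu$, the right-hand side is \emph{precisely} the quantity bounded in \cite{FLLS}, and its manifest non-negativity yields the lower bound for free. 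The term $\tr P(B-A)P=\tr PVP$ is well-defined because $\sqrt{|V|}\,P\in\mathbf S_2$ when $V\in L^1$, and it equals $L_{0,d}^{\mathrm{sc}}\,\mu_+^{d/2}\int V$ by a direct kernel computation. Both inequalities then follow at once, with no approximation and no separate variational step.

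Your route could also be made to work, but two technical points deserve care. First, the linear term you write as $\tr\bigl(\1_{(-\infty,0)}(H_0)\,V\bigr)$ should really be $\tr(PVP)$: the operator $PV$ need not be trace class for unbounded $V$, whereas $PVP$ is under $V\in L^1$. Second, justifying Klein's inequality when $H-H_0=V$ is not trace class, and establishing the $\mathbf S_1$-continuity $V\mapsto A(V)$ required for your density argument, are both genuine pieces of work; the exact identity above bypasses them entirely. What the paper's approach buys is that the lower bound, the identification with \cite{FLLS}, and the trace-class bookkeeping are all packaged into one elementary computation with projections.
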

Of course, $\mu>0$ is the only novel case; we get the case $\mu\leq0$ immediately
from \eqref{lt1}.
We also obtain the $\gamma>1$ versions of the inequality (they are a simple corollary of the $\gamma=1$ case):

\begin{corollary}\label{lt_cor}
Let $d\geq 2$ and $\gamma>1$. Then there is a constant $L_{\gamma,d}$ such that for all $\mu\in\R$ and all $V\in L^1(\R^d)\cap L^{\gamma+d/2}(\R^d)$ (with $V\in \ell^1(L^2)$ if $d\geq 4$) one has
\begin{align*}
0 & \leq \tr\left( \left(-\Delta+V-\mu\right)_-^\gamma - \left(-\Delta -\mu \right)_-^\gamma \right) 
+ L_{\gamma-1,d}^{\mathrm{sc}} \ \mu_+^{\gamma+d/2-1} \int_{\R^d} V(\x) \,dx 
\\
& \leq L_{\gamma,d} \int_{\R^d} \left( \left(V(\x)-\mu\right)_-^{\gamma+d/2} - \mu_+^{\gamma+d/2} + 
\left(\gamma+\frac d2\right) \mu_+^{\gamma+d/2-1} V(\x) \right) d\x.
\end{align*}
\end{corollary}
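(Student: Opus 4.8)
The plan is to reduce Corollary~\ref{lt_cor} to the $\gamma=1$ case of Theorem~\ref{lt} by an elementary integral representation of the function $t\mapsto t_-^\gamma$ for $\gamma>1$. First I would write, for $\gamma>1$ and $t\in\R$,
\begin{equation*}
t_-^\gamma = c_\gamma \int_0^\infty (t+s)_-\, s^{\gamma-2}\,ds \,,
\qquad c_\gamma = \frac{\gamma(\gamma-1)}{\Gamma(\gamma)\Gamma(2)}\cdot\frac{1}{?}\,,
\end{equation*}
with the explicit constant $c_\gamma$ fixed by testing on $t=-1$; the point is only that the formula holds with \emph{some} positive constant. Applying this with $t = -\Delta+V-\mu$ and with $t=-\Delta-\mu$ and formally subtracting, one gets
\begin{equation*}
\tr\left( (-\Delta+V-\mu)_-^\gamma - (-\Delta-\mu)_-^\gamma \right)
= c_\gamma \int_0^\infty \tr\left( (-\Delta+V-\mu-s)_- - (-\Delta-\mu-s)_- \right) s^{\gamma-2}\,ds \,.
\end{equation*}
Each integrand is controlled by Theorem~\ref{lt} applied with chemical potential $\mu+s$, which is licit because $V\in L^1\cap L^{\gamma+d/2}\subset L^1\cap L^{1+d/2}$ on the support of the relevant part of $V$ (and $V\in\ell^1(L^2)$ if $d\geq4$), so that $(-\Delta+V-\mu)_-^\gamma-(-\Delta-\mu)_-^\gamma$ is indeed trace class and the lower bound $0\leq\cdots$ holds termwise.

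The substantive computation is then to insert the two-sided bound from Theorem~\ref{lt} into the $s$-integral and carry out the resulting one-dimensional integrals over $s\in(0,\infty)$ against the weight $s^{\gamma-2}\,ds$. Concretely, one has to check the two identities
\begin{equation*}
c_\gamma \int_0^\infty L_{0,d}^{\mathrm{sc}}\,(\mu+s)_+^{d/2}\, s^{\gamma-2}\,ds = L_{\gamma-1,d}^{\mathrm{sc}}\,\mu_+^{\gamma+d/2-1}
\end{equation*}
for the additive ``background'' term, and, pointwise in $\x$,
\begin{equation*}
c_\gamma \int_0^\infty \left( (V(\x)-\mu-s)_-^{1+d/2} - (\mu+s)_+^{1+d/2} + \left(1+\tfrac d2\right)(\mu+s)_+^{d/2}V(\x) \right) s^{\gamma-2}\,ds
\end{equation*}
equals the integrand on the right-hand side of Corollary~\ref{lt_cor}, up to the constant $L_{\gamma,d}:=c_\gamma L_{1,d}\cdot(\text{numerical factor})$. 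Both reduce, after the change of variables $s\mapsto s/\mu$ (when $\mu>0$) or direct evaluation (when $\mu\leq0$), to Beta-function integrals of the shape $\int_0^\infty (1+s)_-^{?}\,s^{\gamma-2}\,ds$ and $\int_0^1 (1-s)^{d/2}s^{\gamma-2}\,ds$; the first identity is exactly the statement that the integral representation is compatible with the semiclassical constants, which can also be seen directly by applying the representation of $t_-^\gamma$ under the phase-space integral defining $L_{\gamma,d}^{\mathrm{sc}}$.

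The main obstacle I anticipate is not any single estimate but the bookkeeping needed to justify interchanging the trace with the $s$-integral and to control integrability at the two endpoints $s\to0^+$ and $s\to\infty$. Near $s=0$ the weight $s^{\gamma-2}$ is integrable since $\gamma>1$, and the trace-class norm of $(-\Delta+V-\mu-s)_- - (-\Delta-\mu-s)_-$ is bounded uniformly for $s$ in compact sets by the quantitative form of Theorem~\ref{lt} (the right-hand side is continuous and finite in the chemical potential); near $s=\infty$ one uses that for $s$ large the operator $(-\Delta-\mu-s)_-$ vanishes and $(-\Delta+V-\mu-s)_-$ is trace class with norm decaying like a negative power of $s$ coming from the $L^{1+d/2}$ (or $\ell^1(L^2)$) bound on $V_-$, which beats $s^{\gamma-2}$ after possibly splitting off the few values of $\gamma$ where one needs the sharper Lieb--Thirring decay. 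Once Fubini is justified, everything else is the routine Beta-integral calculus sketched above, and the sign statement $0\leq\cdots$ is inherited termwise from Theorem~\ref{lt}.
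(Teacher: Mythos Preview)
Your approach is the same as the paper's (the Aizenman--Lieb trick of writing $t_-^\gamma$ as a superposition of $t_-$ and then applying Theorem~\ref{lt} at each value of the shifted chemical potential), and the overall structure is right. However, there is a sign slip that propagates through the whole argument and makes several of your displayed identities false.

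From the correct identity $t_-^\gamma = c_\gamma\int_0^\infty (t+s)_-\,s^{\gamma-2}\,ds$ with $t=-\Delta+V-\mu$ you should obtain $(-\Delta+V-\mu+s)_-$ in the integrand, i.e.\ chemical potential $\mu-s$, not $\mu+s$. With your sign the claimed identity
\[
c_\gamma \int_0^\infty L_{0,d}^{\mathrm{sc}}\,(\mu+s)_+^{d/2}\, s^{\gamma-2}\,ds = L_{\gamma-1,d}^{\mathrm{sc}}\,\mu_+^{\gamma+d/2-1}
\]
is simply false: for $\mu>0$ the left side diverges at $s\to\infty$. Likewise your endpoint analysis is inverted: $(-\Delta-\mu-s)_-$ does \emph{not} vanish for large $s$ (its spectrum starts at $-\mu-s\to-\infty$), whereas with the correct sign $(-\Delta-(\mu-s))_-$ does vanish once $s>\mu$, which is exactly what makes the $s$-integral finite and reduces everything to a Beta integral. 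With the sign corrected, your Fubini discussion and the Beta-function bookkeeping go through, and the paper's proof is essentially what you wrote: one also notes that $V\in L^1\cap L^{\gamma+d/2}$ gives $V\in L^{1+d/2}$ by interpolation (not by a support argument), and the interchange of trace and $\int_0^\infty\!ds$ is justified via Theorem~\ref{main} rather than via Theorem~\ref{lt}.
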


Beyond the applications to minimization problems mentioned above, we note that Lieb--Thirring inequalities with $\mu>0$ recently also proved useful in time-dependent problems \cite{LS}.


\subsection{Key ideas of the proof}\label{sec.a4}
First assume that $f\in  C_0^\infty(\bbR)$. 
Then for any given $N\in\N$ one can construct an almost analytic extension $\wt f\in C_0^\infty(\bbC)$ 
with the properties $\wt f|_{\bbR}=f$ and 
$$
\abs{\overline{\partial} \wt f(z)}\leq C_N \abs{\Im z}^N \,;
$$
here 
$\overline{\partial} 
=\frac{\partial}{\partial \overline{z}}
=\frac12(\frac{\partial}{\partial x}+i\frac{\partial}{\partial y})$.
This allows to represent 
$$
f(H)=\frac1\pi \int_\bbC \overline{\partial} \wt f(z) (H-z)^{-1} dx\, dy, \quad z=x+iy
$$
for any self-adjoint operator $H$. 
Based on this idea (which has been rediscovered several times) several versions of functional calculus have been constructed by many authors \cite{Dynkin2, HelfferSjostrand, JensenNakamura, Davies2, DimassiSjostrand}.

Further, this representation can be applied to perturbation theory as follows. 
Let $H_0$ and $H$ be two self-adjoint operators; denote $V=H-H_0$ and 
\begin{equation}
R(z)=(H-z)^{-1}, \quad R_0(z)=(H_0-z)^{-1}. 
\label{23}
\end{equation}
Then, by the resolvent identity, 
\begin{equation}
f(H)-f(H_0)
=
-\frac1\pi \int_\bbC \overline{\partial} \wt f(z)R(z)VR_0(z) \,dx\, dy\,, \quad z=x+iy \,,
\label{24}
\end{equation}
and so one can derive estimates for the norm of $f(H)-f(H_0)$ in appropriate 
classes from the available estimates for the corresponding norms of $R(z)VR_0(z)$. 
This idea has been extensively used before (see, e.g., \cite{DimassiSjostrand} and 
references therein).
Our construction is based on the following two additional observations: 
\begin{enumerate}
\item[(1)] For the Schr\"odinger operator, the available estimates for 
$R(z)VR_0(z)$ are better than one would expect for a general pair 
of operators $H_0$, $H$ under some trace class condition 
(such as $V\in \Sch_1$ or its variants). This is essentially due 
to the \emph{limiting absorption principle}.
\item[(2)] One can go far beyond the class $f\in C_0^\infty(\bbR)$. 
In fact, E.~M.~Dynkin \cite{Dynkin} has a beautiful characterisation 
of Besov classes $B^s_{p,q}(\bbR)$ in terms of the behaviour 
of the almost analytic extension (see Theorem~\ref{thm.dynkin} below).
\end{enumerate}

Combining Dynkin's theorem with available estimates for $R(z)VR_0(z)$
gives surprisingly sharp results in a surprisingly elementary way. 
For example, let us sketch

\begin{proof}[Proof of Theorem~\ref{thm0} for $d=1,2,3$]
Under the assumption \eqref{2} with $\rho>1$, one has the 
standard limiting absorption principle:
\begin{equation}
\sup_{\re z\in\delta, \im z \neq0}\norm{\jap{\x}^{-\rho/2}R(z)\jap{\x}^{-\rho/2}}\leq C_\rho(\delta)\,,
\qquad \rho>1,
\label{10}
\end{equation}
where $\delta\subset(0,\infty)$ is any compact interval;
see, e.g., \cite[Thm. 6.2.1]{Ya}. 
From here, 
using the resolvent identity and a trivial Hilbert-Schmidt bound (using $\rho>d$; see Lemma~\ref{hs} below),
one easily derives the estimate
\begin{equation}
\norm{R(z)VR_0(z)}_{\Sch_1}\leq C\, \abs{\Im z}^{-1} \,,
\label{25}
\end{equation}
when $\Re z$ is positive and separated away from zero
(see Corollary~\ref{lma} below and note that the use of Lemma \ref{lap} can be avoided because of \eqref{10}).
On the other hand, Dynkin's theorem (see Theorem \ref{thm.dynkin} below) says that for $f\in B^1_{1,1}(\bbR)$ with 
support in $(0,\infty)$ 
there is an almost analytic continuation $\wt f$ with
support in $\{z: \Re z>0\}$ and  
\begin{equation}
\int_\bbC \abs{\overline\partial \wt f(z)}\, \frac{dx\, dy}{\abs{y}}<\infty, 
\quad z=x+iy.
\label{26}
\end{equation}
Putting together \eqref{24}, \eqref{25} and \eqref{26} yields
\begin{align*}
\norm{f(H)-f(H_0)}_{\Sch_1}
& \leq
\frac1\pi \int_\bbC \abs{\overline{\partial} \wt f(z)}\norm{R(z)VR_0(z)}_{\Sch_1} \,dx\, dy
\\
& \leq
C
\int_\bbC \abs{\overline{\partial} \wt f(z)}\frac{dx\, dy}{\abs{y}}<\infty \,,
\end{align*}
which yields Theorem~\ref{thm0} for $d=1,2,3$.
\end{proof}

We emphasize again that for a general pair of operators $H$, $H_0$ with $V=H-H_0\in\Sch_1$, 
one only has 
$$
\norm{R(z)VR_0(z)}_{\Sch_1}\leq \norm{V}_{\Sch_1}\, \abs{\Im z}^{-2} 
$$
instead of \eqref{25}, which leads to more restrictive assumptions on $f$, see Theorem~\ref{thm.p1}.

\subsection{Connection to the spectral shift function theory}

Implication \eqref{6} is intimately related to the spectral shift function theory (see, e.g., \cite{YaI,Ya}).
Let $\mathfrak M$ be the class of functions such that \eqref{6} holds for any self-adjoint operators $H$ and $H_0$ in a Hilbert space.
If $H-H_0\in\Sch_1$, M.~G.~Krein proved \cite{Krein}
that there is a real-valued function $\xi\in L^1(\bbR)$ such that for 
a suitable subclass of functions $f\in\mathfrak M$, the trace formula 
\begin{equation}
\Tr(f(H)-f(H_0))=\int_{-\infty}^\infty \xi(\lambda)f'(\lambda) d\lambda
\label{7}
\end{equation}
holds true. 
The function $\xi$ is called \emph{M.~G.~Krein's spectral shift function}. 
For any $\xi\in L^1(\bbR)$ there is a pair of operators $H_0$, $H$ such that 
$\xi$ is a spectral shift function for this pair.

The intuition coming from the spectral shift function theory allows one to 
interpret the above results as follows. 
Fix $f$; if one wants \eqref{7} to hold for \emph{any} 
self-adjoint operators $H$, $H_0$ with $H-H_0\in\Sch_1$, then the right side of \eqref{7}
must be well defined for \emph{any} $\xi\in L^1$.
Thus, necessarily we must have $f'\in L^\infty$. 
On the other hand, it is known that the spectral shift function
corresponding to the Schr\"odinger pair \eqref{1} with $V$ satisfying \eqref{2} for some $\rho>d$ is continuous on $(0,\infty)$; see, e.g., \cite[Theorem 9.1.20]{Ya}. Thus, in this case the right hand side of the 
trace formula \eqref{7} is well defined under the weaker assumption $f'\in L^1$. 

Similarly, under the assumption $V=H-H_0\in\Sch_2$ one can prove the 
existence of a function $\eta\in L^1(\bbR)$ such that 
$$
\Tr\left(f(H)-f(H_0)-\frac{d}{d\alpha}f(H_0+\alpha V)|_{\alpha=0}\right)
=
\int_{-\infty}^\infty \eta(\lambda)f''(\lambda)\,d\lambda \,;
$$
this was proven by Koplienko \cite{Koplienko} for a subclass of rational functions $f$
and by Peller \cite{Peller2} for $f\in B_{\infty,1}^2(\bbR)$. 
The function $\eta$ is called \emph{Koplienko's spectral shift function}; see also \cite{GPS} for some further information on this function. 

\subsection{Notation}
Throughout the paper, we use notation \eqref{23} for the resolvents of operators $H_0$ and $H$.
For $z\in\bbC$, we write $z=x+iy$ (we use the boldface $\x$ for the independent
variable in $\bbR^d$ when discussing the Schr\"odinger operator in $L^2(\bbR^d)$). 
For $p\geq1$, $\Sch_p$ is the Schatten class. 
The norm in any Banach space $X$ is denoted by $\norm{\cdot}_{X}$, and $\norm{\cdot}$ refers to
the operator norm.

\section{E.~M.~Dynkin's characterisation of Besov classes and V.~V.~Peller's trace class theorems}\label{sec.b}

\subsection{Besov classes}
For background information on Besov classes we refer, for example, to Triebel's book \cite{Triebel}.
Besov classes can be described as follows. For $t\in\bbR$ we define the operator $\Delta_t$ by
$$
(\Delta_tf)(\lambda)=f(\lambda+t)-f(\lambda),
$$
and let $\Delta_t^n$ be the powers of $\Delta_t$. A function $f\in L^p(\bbR)$ belongs 
to $B^s_{p,q}(\bbR)$, $s>0$, $1\leq p\leq \infty$, $1\leq q<\infty$, if 
$$
\int_\bbR \frac{\norm{\Delta_t^n f}_{L^p}^q}{\abs{t}^{1+sq}} dt<\infty, 
$$
where $n$ is an integer such that $n>s$. (The choice of $n$ does not make any difference.) In fact, we will only deal with classes $B^s_{1,1}$ and $B^s_{\infty,1}$.

We make use of E.~M.~Dynkin's characterisation of Besov
spaces \cite{Dynkin} in terms of pseudoanalytic continuation.
We will only be interested in compactly supported functions. 
For such functions, the results of \cite{Dynkin} can be expressed as follows:

\begin{theorem}\cite{Dynkin} \label{thm.dynkin}
Let $s>0$, $1\leq p\leq \infty$, $1\leq q< \infty$. 
For any compactly supported function $f\in B^s_{p,q}(\bbR)$, there is 
a (non-unique!) compactly supported function $\omega$ on $\bbC$ 
such that 
\begin{equation}
f(\lambda)=\frac1\pi \int_{\bbC}\omega(z)(\lambda-z)^{-1}dx\, dy, 
\quad 
\lambda\in\bbR, 
\quad 
z=x+iy,
\label{13}
\end{equation}
and 
\begin{equation}
\left(\int_\bbR 
\left(\int_\bbR \abs{\omega(x+iy)}^p \frac{dx}{\abs{y}^{p(s-1)}}\right)^{q/p}
\frac{dy}{\abs{y}}
\right)^{1/q}
<\infty.
\label{14}
\end{equation}
If $\supp f\subset[a,b]$, then for any $\eps>0$ the function $\omega$ can 
be chosen to be supported in the $\eps$-neighbourhood of $[a,b]$ in $\bbC$. 
\end{theorem}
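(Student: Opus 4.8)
The final statement to be proved is Theorem~\ref{thm.dynkin}, Dynkin's characterisation of Besov spaces via pseudoanalytic continuation. Here is how I would approach it.

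\medskip

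\textbf{Overall strategy.} The plan is to reduce to the ``classical'' Dynkin result in \cite{Dynkin}, which is usually stated for functions analytic in a disc or half-plane with prescribed boundary smoothness, and then transfer to the compactly supported real-variable formulation above. So the first step is to recall the precise form of Dynkin's theorem as it appears in \cite{Dynkin}: given $f\in B^s_{p,q}$ with compact support, one builds a pseudoanalytic (almost analytic) extension $\wt f$ to $\bbC$, smooth off $\bbR$, with $\wt f|_\bbR = f$ and with $|\overline\partial\wt f(x+iy)|$ controlled in the mixed Lebesgue norm \eqref{14} with $\omega=\overline\partial\wt f$. Concretely, the standard construction is $\wt f(x+iy) = \sum_k \hat f_k(x)\, \chi(2^k y)$ or, more robustly, a convolution/Poisson-type extension $\wt f(x+iy) = (f * \phi_y)(x)$ for a suitable mollifier $\phi$ depending on $y$, truncated to a neighbourhood of the support. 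One then computes $\overline\partial\wt f$ and checks that the Besov seminorm of $f$ is, up to constants, equivalent to the quantity in \eqref{14}. Both directions of this equivalence are in \cite{Dynkin}; for the present theorem only the direction ``Besov $\Rightarrow$ extension with \eqref{14}'' is needed.

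\medskip

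\textbf{Key steps in order.} (i) Fix a function $\chi\in C_0^\infty(\bbR)$ with $\chi(0)=1$ and define the extension $\wt f(x+iy)$ by a dyadic or continuous mollification of $f$ at scale $|y|$, multiplied by a cutoff that localises $\wt f$ to the $\eps$-neighbourhood of $[a,b]$; this gives a compactly supported $\wt f\in C^\infty(\bbC\setminus\bbR)$ (indeed $C_0^\infty(\bbC)$ if $f$ is smooth, but in general only smooth off the real axis) with $\wt f|_\bbR=f$. (ii) Set $\omega=\overline\partial\wt f$ and verify the Cauchy–Pompeiu representation \eqref{13}: since $\wt f$ is compactly supported and smooth away from $\bbR$, Green's formula gives $f(\lambda)=\frac1\pi\int_\bbC \overline\partial\wt f(z)\,(\lambda-z)^{-1}\,dx\,dy$ for $\lambda\in\bbR$, provided one checks that the boundary terms on a shrinking strip around $\bbR$ vanish (this uses continuity of $\wt f$ up to $\bbR$ and integrability of $\overline\partial\wt f$ near $\bbR$). (iii) Estimate $|\overline\partial\wt f(x+iy)|$ in terms of finite differences of $f$ at scale $|y|$: differentiating the mollified extension, $\overline\partial\wt f$ is a combination of terms like $|y|^{-1}$ times an $n$-th order difference $\Delta^n_{|y|}f$ smoothed in $x$, plus contributions from the cutoff which are harmless because the cutoff is fixed and $f$ is bounded there. (iv) Insert this bound into \eqref{14}: by Minkowski's / Young's inequality the $x$-integral of $|\overline\partial\wt f(x+iy)|^p$ is bounded by $|y|^{-p}\|\Delta^n_{|y|}f\|_{L^p}^p$ up to a constant, and then the remaining $y$-integral is precisely the Besov seminorm $\int_\bbR \|\Delta^n_{|y|}f\|_{L^p}^q\,|y|^{-1-sq}\,dy$, which is finite by hypothesis. (v) Finally, the support statement: because the cutoff in step (i) was chosen supported in the $\eps$-neighbourhood of $[a,b]$, so is $\wt f$, hence so is $\omega$.

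\medskip

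\textbf{Main obstacle.} The delicate point is step (iv) combined with the interaction between the mollification scale and the Besov difference structure — one must choose the mollifier $\phi$ (and the order $n$ of differences) so that $\overline\partial\wt f$ genuinely reproduces $n$-th order differences rather than, say, a single first difference, which is what makes the estimate sharp for all $s>0$ rather than only $0<s<1$. Equally, one must be careful that the localising cutoff does not destroy the reproducing formula \eqref{13}: cutting off introduces an extra term $f\cdot\overline\partial(\text{cutoff})$ supported away from $[a,b]$, and one has to argue that for $\lambda\in[a,b]$ (or wherever we ultimately evaluate, namely on the spectra in the applications) this term either integrates to something harmless or can be absorbed. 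In practice this is handled by first establishing \eqref{13} and \eqref{14} without the support restriction (global extension) and then multiplying by a cutoff equal to $1$ on a neighbourhood of $[a,b]$, noting that the correction is pseudoanalytic of arbitrarily high order and smooth, hence contributes an $\omega$ that still satisfies \eqref{14}. Since all of this is carried out in detail in \cite{Dynkin}, the proof here is essentially a matter of quoting that reference and recording the compactly supported specialisation; I would present it as such, with the construction of $\wt f$ spelled out enough to make the support claim transparent.
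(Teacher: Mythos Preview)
The paper does not prove Theorem~\ref{thm.dynkin}; it is quoted from \cite{Dynkin} as a black box, with only the brief remark afterwards that $\omega$ is usually obtained as $\overline\partial\wt f$ for an almost analytic continuation $\wt f$ of $f$. So there is no ``paper's own proof'' to compare against.

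Your sketch is a faithful outline of Dynkin's original argument (mollification at scale $|y|$, Cauchy--Pompeiu, reduction of $\overline\partial\wt f$ to high-order differences, then reading off the Besov seminorm). That is the right approach and the right reference. One small caution: in step (ii) you should be a bit more careful about the sense in which \eqref{13} holds, since $\omega$ need not be integrable near $\bbR$ for small $s$; the representation is obtained by a limiting argument from the Cauchy--Pompeiu formula on the region $\{|y|>\delta\}$, using that $\wt f$ is continuous up to $\bbR$. Also, your worry about the cutoff spoiling \eqref{13} is slightly misplaced: multiplying $\wt f$ by a smooth cutoff $\psi$ equal to $1$ near $[a,b]$ adds the term $\wt f\,\overline\partial\psi$ to $\omega$, and this term is supported where $\wt f$ is smooth and bounded, hence contributes a bounded compactly supported piece to $\omega$ that trivially satisfies \eqref{14} and does not disturb \eqref{13} for $\lambda\in\bbR$. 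With those points noted, your plan is correct and is exactly what the paper intends the reader to take on faith from \cite{Dynkin}.
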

In fact, the condition given in the theorem is necessary and sufficient for
the inclusion $f\in B^s_{p,q}(\bbR)$, and the Besov norm of $f$ is 
equivalent to the infimum of the expression \eqref{14} over all possible 
functions $\omega$. The function $\omega$ is usually obtained as
$$
\omega(z)=\overline{\partial} \wt f(z),
$$
where $\widetilde f$ is an almost analytic continuation of $f$. 
An almost analytic  continuation of $f$ can be constructed in several possible 
ways and so it is convenient not to fix the choice of $\omega$. 

For $p=q=1$ condition \eqref{14} becomes
\begin{equation}
f\in B^s_{1,1}(\bbR) 
\Leftrightarrow
\int_{\bbR^2} \abs{\omega(x+iy)}\,\frac{dx\, dy}{\abs{y}^s}<\infty,
\label{15}
\end{equation}
and for $p=\infty$, $q=1$ we get
\begin{equation}
f\in B^s_{\infty,1}(\bbR) 
\Leftrightarrow
\int_{\bbR} \sup_{x}\abs{\omega(x+iy)}\,\frac{dy}{\abs{y}^s}<\infty.
\label{16}
\end{equation}

\subsection{Peller's trace class theorems}
To demonstrate the effectiveness of Dynkin's characterization, below we give short proofs of the following 
two theorems of Peller mentioned in the introduction.

\begin{theorem}\cite{Peller}\label{thm.p1}
Let $f\in B^1_{\infty,1}(\bbR)$.
Then the implication \eqref{6} holds true and, for some absolute constant $C$, 
one has
$$
\left\| f(H)-f(H_0)\right\|_{\Sch_1}\leq C\norm{f}_{B^1_{\infty,1}}\norm{H-H_0}_{\Sch_1} \,.
$$
\end{theorem}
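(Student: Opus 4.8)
The plan is to use Dynkin's characterisation (Theorem~\ref{thm.dynkin}) together with the almost-analytic functional calculus formula \eqref{24}, exactly in the spirit of the sketched proof of Theorem~\ref{thm0}, but now tracking the abstract $\Sch_1$ bound rather than the improved Schr\"odinger bound. First I would reduce to the case of compactly supported $f$: since $f\in B^1_{\infty,1}(\bbR)$ the derivative $f'$ is bounded and (after the reduction) we may use a smooth cutoff and the fact that $B^1_{\infty,1}$ is (up to equivalent norms) stable under multiplication by a fixed $C_0^\infty$ function; away from the joint spectrum the resolvents are bounded and the contributions are harmless. So assume $\supp f\subset[a,b]$ and invoke Theorem~\ref{thm.dynkin} with $s=1$, $p=\infty$, $q=1$ to obtain a compactly supported $\omega=\overline\partial\wt f$ with $f(\lambda)=\frac1\pi\int_\bbC\omega(z)(\lambda-z)^{-1}\,dx\,dy$ and, by \eqref{16},
\[
\int_\bbR \sup_x |\omega(x+iy)|\,\frac{dy}{|y|}\lesssim \|f\|_{B^1_{\infty,1}}.
\]

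Next I would apply the resolvent identity to \eqref{24}: writing $V=H-H_0$,
\[
f(H)-f(H_0)=-\frac1\pi\int_\bbC \omega(z)\,R(z)V R_0(z)\,dx\,dy,
\]
and estimate the $\Sch_1$ norm of the integrand pointwise. Here is where the abstract hypothesis $V\in\Sch_1$ enters, via the crude bound $\|R(z)VR_0(z)\|_{\Sch_1}\le \|R(z)\|\,\|V\|_{\Sch_1}\,\|R_0(z)\|\le \|V\|_{\Sch_1}\,|\Im z|^{-2}$, valid for any self-adjoint $H,H_0$. Note this is worse by one power of $|\Im z|$ than the Schr\"odinger estimate \eqref{25}, which is why we need the stronger Besov index $s=1$ with $p=\infty$ (so that $|y|^{-1}$ in \eqref{26} is replaced by $\sup_x|\omega(x+iy)|\,|y|^{-1}$, but we still have a free factor from the spatial integration). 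The key point is that since $\omega$ has compact support in $x$, say $|\Re z|\le M$, the $x$-integral of $|\omega(x+iy)|$ is at most $2M\sup_x|\omega(x+iy)|$, so
\[
\|f(H)-f(H_0)\|_{\Sch_1}\le \frac1\pi\,\|V\|_{\Sch_1}\int_\bbC |\omega(x+iy)|\,\frac{dx\,dy}{|y|^2}
\le \frac{2M}{\pi}\,\|V\|_{\Sch_1}\int_\bbR \sup_x|\omega(x+iy)|\,\frac{dy}{|y|^2}.
\]

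That last integral, however, has $|y|^{-2}$, not $|y|^{-1}$, so it does \emph{not} match \eqref{16} directly — and resolving this mismatch is the main obstacle. The fix is to integrate by parts in the $x$-variable inside \eqref{24}, exploiting that $R(z)VR_0(z)$ is holomorphic in $z$ away from the real axis so that $\partial_x\big(R(z)VR_0(z)\big)=-i\,\partial_y\big(R(z)VR_0(z)\big)$, and one gains via $\frac{\partial}{\partial x}(\lambda-z)^{-1}$ the cancellation that effectively trades one power of $|y|$ for one $x$-derivative on $\omega$; alternatively, and more cleanly, one uses the sharper resolvent bound obtained by writing $R(z)VR_0(z)=R(z)V^{1/2}\,|V|^{1/2}\cdot\mathrm{sgn}(V)|V|^{1/2}R_0(z)$ and applying the smoothing from holomorphy: the derivative identity $\frac{d}{dz}R(z)=R(z)^2$ gives, after one integration by parts against $\partial_x$ of the Cauchy kernel, the improved estimate $\int_\bbC|\omega|\,\|R V R_0\|_{\Sch_1}\,dx\,dy\lesssim \|V\|_{\Sch_1}\int_\bbR\sup_x|\omega(x+iy)|\,|y|^{-1}\,dy$. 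Once this $|y|^{-1}$ estimate is in place, combining it with the Besov bound from \eqref{16} yields
\[
\|f(H)-f(H_0)\|_{\Sch_1}\le C\,\|f\|_{B^1_{\infty,1}}\,\|H-H_0\|_{\Sch_1},
\]
which is the claim; I expect the integration-by-parts / holomorphy argument that upgrades the $|y|^{-2}$ bound to $|y|^{-1}$ to be the only genuinely delicate point, everything else being bookkeeping.
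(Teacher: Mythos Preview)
Your setup is exactly right, and you have correctly identified the crux: the crude bound $\|R(z)VR_0(z)\|_{\Sch_1}\le\|V\|_{\Sch_1}\,|\Im z|^{-2}$ is one power of $|\Im z|$ short of matching the weight $|y|^{-1}$ in \eqref{16}. The gap is in how you propose to close this mismatch. Neither of your suggested fixes works as stated. Integrating by parts in $x$ moves a derivative onto $\omega$, but Dynkin's characterisation gives no control on $\partial_x\omega$; and the Cauchy--Riemann relation $\partial_x=-i\partial_y$ for the holomorphic factor $R(z)VR_0(z)$ just shuffles the derivative around without improving the $|y|$-power. The phrase ``smoothing from holomorphy via $\frac{d}{dz}R(z)=R(z)^2$'' is not an argument; writing $R_0(z)=\partial_z(\cdots)$ would make the singularity worse, not better. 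Also note that your intermediate step bounding $\int|\omega(x+iy)|\,dx$ by $2M\sup_x|\omega|$ introduces a constant depending on $\supp f$, which is incompatible with the claimed absolute constant.

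The paper's resolution is different and cleaner, and it does not use any integration by parts. Factor $\|R_0(z)VR(z)\|_{\Sch_1}\le\|R_0(z)|V|^{1/2}\|_{\Sch_2}\,\||V|^{1/2}R(z)\|_{\Sch_2}$ and, using the spectral decomposition $|V|=\sum_n v_n(\cdot,\psi_n)\psi_n$ of the trace class operator $|V|$, write
\[
\|R_0(z)|V|^{1/2}\|_{\Sch_2}^2=\sum_n v_n\,\|R_0(z)\psi_n\|^2=\sum_n v_n\int_\bbR\frac{d\mu_{\psi_n}(t)}{(t-x)^2+y^2}.
\]
Now integrate $|\omega(x+iy)|$ against this \emph{before} estimating: bounding $|\omega|\le\sup_x|\omega|$ and using the Poisson integral $\int_\bbR\frac{dx}{(t-x)^2+y^2}=\pi/|y|$ gives
\[
\int_\bbC|\omega(z)|\,\|R_0(z)|V|^{1/2}\|_{\Sch_2}^2\,dx\,dy\le\pi\,\|V\|_{\Sch_1}\int_\bbR\sup_x|\omega(x+iy)|\,\frac{dy}{|y|}.
\]
The same bound holds with $R$ in place of $R_0$, and combining the two via the arithmetic--geometric mean inequality yields the theorem with an absolute constant. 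The point you were missing is that one should not estimate $\|R_0(z)\|$ and then integrate, but rather keep the spectral-measure expression for $\|R_0(z)\psi\|^2$ and let the $x$-integral of the Poisson kernel produce the single factor $|y|^{-1}$.
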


\begin{theorem}\cite{Peller2}\label{thm.p2}
Let $f\in B^2_{\infty,1}(\bbR)$. Then the implication \eqref{12} holds true, 
where the derivative exists in the operator norm, and, for some absolute constant $C$, 
one has
$$
\left\| f(H)-f(H_0)- \frac{d}{d\alpha}f((1-\alpha)H_0-\alpha H)|_{\alpha=0} \right\|_{\Sch_1}\leq C\norm{f}_{B^2_{\infty,1}}\norm{H-H_0}_{\Sch_2} \,.
$$
\end{theorem}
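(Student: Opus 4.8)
\textbf{Proof proposal for Theorem \ref{thm.p2}.}

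The plan is to adapt the almost analytic extension argument that was sketched for Theorem \ref{thm0}, but now working at the level of the \emph{second} variation. For an arbitrary self-adjoint pair $H_0$, $H$ with $V=H-H_0\in\Sch_2$, and with $R(z)$, $R_0(z)$ the resolvents, I would start from the representation \eqref{24} and push the resolvent expansion one step further: using $R(z)=R_0(z)-R_0(z)VR(z)$ twice one gets
\begin{equation*}
R(z)-R_0(z)+R_0(z)VR_0(z)=R_0(z)VR_0(z)VR(z).
\end{equation*}
Combining this with the Dynkin/Helffer--Sj\"ostrand formula $g(H)-g(H_0)=-\tfrac1\pi\int_\bbC\overline\partial\wt g(z)R(z)VR_0(z)\,dx\,dy$ applied to $g=f$ and then to $g=$ the linearized operator, the remainder in \eqref{12} becomes
\begin{equation*}
f(H)-f(H_0)-\tfrac{d}{d\alpha}f((1-\alpha)H_0+\alpha H)\big|_{\alpha=0}
=-\frac1\pi\int_\bbC \overline\partial\wt f(z)\,R_0(z)VR_0(z)VR(z)\,dx\,dy .
\end{equation*}
(One has to justify that the derivative in $\alpha$ exists in operator norm and equals $-\tfrac1\pi\int_\bbC\overline\partial\wt f(z)R_0(z)VR_0(z)\,dx\,dy$; this is the standard first-order perturbation computation and should be dispatched quickly, using that $\overline\partial\wt f$ vanishes to high order on $\bbR$ so the integrals converge.)

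Next I would estimate the $\Sch_1$ norm of the integrand. Writing $V=|V|^{1/2}(\sgn V)|V|^{1/2}$ and splitting the three resolvent factors, the operator $R_0(z)VR_0(z)VR(z)$ is a product of two Hilbert--Schmidt-type pieces, each of the form $R_0(z)^{1/?}|V|^{1/2}\cdots$; more directly, by H\"older in Schatten classes,
\begin{equation*}
\norm{R_0(z)VR_0(z)VR(z)}_{\Sch_1}
\leq \norm{R_0(z)V}_{\Sch_2}\,\norm{R_0(z)V}_{\Sch_2}\,\norm{R(z)}
\leq \norm{V}_{\Sch_2}^2\,\abs{\Im z}^{-3}.
\end{equation*}
Here I used the elementary bounds $\norm{R_0(z)V}_{\Sch_2}\le\norm{R_0(z)}\,\norm{V}_{\Sch_2}\le\abs{\Im z}^{-1}\norm{V}_{\Sch_2}$ and $\norm{R(z)}\le\abs{\Im z}^{-1}$. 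Then
\begin{equation*}
\norm{f(H)-f(H_0)-\tfrac{d}{d\alpha}f\big|_{0}}_{\Sch_1}
\leq \frac{\norm{V}_{\Sch_2}^2}{\pi}\int_\bbC\abs{\overline\partial\wt f(z)}\,\frac{dx\,dy}{\abs{\Im z}^{3}} .
\end{equation*}

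Finally I would invoke Dynkin's theorem, Theorem \ref{thm.dynkin}, in the form \eqref{15} with $s=2$: for $f\in B^2_{1,1}(\bbR)$ — and a fortiori for $f\in B^2_{\infty,1}(\bbR)$, since $B^2_{\infty,1}\subset B^2_{1,1}$ for compactly supported functions, or more directly via \eqref{16} with $s=2$ combined with the fact that $\overline\partial\wt f$ has compact support so $\sup_x$ controls the $x$-integral up to a constant — one can choose the pseudoanalytic extension $\wt f$ with $\int_\bbC\abs{\overline\partial\wt f(z)}\,\abs{\Im z}^{-2}\sup_x(\cdots)$ finite; concretely, $\int_\bbR\sup_x\abs{\overline\partial\wt f(x+iy)}\,\abs{y}^{-2}\,dy<\infty$ and $\overline\partial\wt f$ is supported in a bounded horizontal strip, so $\int_\bbC\abs{\overline\partial\wt f(z)}\abs{\Im z}^{-3}\,dx\,dy\lesssim\int_\bbR\sup_x\abs{\overline\partial\wt f(x+iy)}\abs{y}^{-2}\,dy\cdot C\lesssim\norm{f}_{B^2_{\infty,1}}$ once we note the extra $\abs{y}^{-1}$ is harmless on the bounded strip... wait, that last gain is not automatic. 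The main obstacle, and the point that needs the most care, is precisely matching the power of $\abs{\Im z}$: the crude bound above produced $\abs{\Im z}^{-3}$, which pairs with $B^3_{\infty,1}$, not $B^2_{\infty,1}$. To recover one power of $\abs{\Im z}$ I would not bound $\norm{R(z)}$ by $\abs{\Im z}^{-1}$ but instead keep $R(z)=R_0(z)+O_{\Sch_2}$-corrections and use that $R_0(z)VR_0(z)VR_0(z)$ can be estimated by $\norm{V}_{\Sch_2}^2\abs{\Im z}^{-2}\norm{R_0(z)^{1/2}\cdots}$ — more honestly, by interpolating/using that one of the three resolvent factors can be replaced by the \emph{difference} $R_0(z)-\overline{R_0(z)}$ which gains a power of $\abs{\Im z}$, or by symmetrizing the Hilbert--Schmidt estimate as $\norm{R_0(z)|V|^{1/2}}_{\Sch_4}^2$ and exploiting $\abs{\Im z}$-smoothing in the resolvent. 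Getting this bookkeeping exactly right — so that the final integral is $\int\abs{\overline\partial\wt f(z)}\abs{\Im z}^{-2}$ and hence controlled by $\norm{f}_{B^2_{\infty,1}}$ via \eqref{16} — is the crux; everything else is routine.
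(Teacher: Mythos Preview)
Your overall structure matches the paper's: second-order resolvent expansion, integral representation of the remainder via \eqref{13}, then a Schatten estimate on the integrand paired with Dynkin's condition \eqref{16} at $s=2$. You have also correctly located the only nontrivial step --- recovering one power of $|\Im z|$ so that the integrand scales like $|\Im z|^{-2}$ rather than $|\Im z|^{-3}$.

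The gap is that none of your proposed fixes (replacing $R$ by $R_0+$ corrections, taking $R_0-\overline{R_0}$, passing to $\Sch_4$) is how this power is actually recovered, and it is not clear any of them would succeed. The mechanism in the paper is different and simpler: it is the spectral-measure trick already used in the proof of Theorem~\ref{thm.p1}. After writing
\[
\norm{R_0(z)VR(z)VR_0(z)}_{\Sch_1}\le \frac{1}{|\Im z|}\,\norm{R_0(z)V}_{\Sch_2}^2
\]
(the paper uses the symmetric ordering $R_0VRVR_0$, but your ordering yields the same bound), one does \emph{not} crudely estimate $\norm{R_0(z)V}_{\Sch_2}^2\le\norm{V}_{\Sch_2}^2|\Im z|^{-2}$. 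Instead, diagonalize $VV^*=\sum_n v_n(\cdot,\psi_n)\psi_n$ with $\sum_n v_n=\norm{V}_{\Sch_2}^2$, so that
\[
\norm{R_0(z)V}_{\Sch_2}^2=\sum_n v_n\int_\bbR \frac{d\mu_{\psi_n}(t)}{(t-x)^2+y^2},
\]
and then integrate in $x$ \emph{before} estimating: bound $|\omega(z)|\le\sup_x|\omega(x+iy)|$ and use $\int_\bbR\frac{dx}{(t-x)^2+y^2}=\pi/|y|$ to obtain
\[
\int_\bbC |\omega(z)|\,\frac{\norm{R_0(z)V}_{\Sch_2}^2}{|\Im z|}\,dx\,dy
\le \pi\,\norm{V}_{\Sch_2}^2\int_\bbR \sup_x|\omega(x+iy)|\,\frac{dy}{|y|^2},
\]
which is exactly \eqref{16} with $s=2$. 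The $\sup_x$ in the $B^2_{\infty,1}$ condition is precisely what permits this Poisson-type $x$-integration that trades $|y|^{-2}$ for $|y|^{-1}$; it is the heart of the argument, not a bookkeeping detail.
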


As elsewhere in the paper, we assume that $f$ is compactly supported, although in fact Peller's original results do not require this.

\begin{proof}[Proof of Theorem~\ref{thm.p1}]
We use representation \eqref{13}, where $\omega$ satisfies \eqref{16} with $s=1$. 
As in \eqref{24}, by the resolvent identity, we can write
$$
f(H)-f(H_0)=-\frac1\pi \int_{\bbC} \omega(z)R_0(z)VR(z)\,dx\, dy\,.
$$
We have
\begin{align*}
\norm{R_0(z)VR(z)}_{\Sch_1}
& \leq
\norm{R_0(z)\abs{V}^{1/2}}_{\Sch_2} \norm{\abs{V}^{1/2}R(z)}_{\Sch_2}
\\
& \leq
\frac12 \norm{R_0(z)\abs{V}^{1/2}}_{\Sch_2}^2+
\frac12 \norm{R(z)\abs{V}^{1/2}}_{\Sch_2}^2.
\end{align*}
Write the spectral representation of the trace class operator $\abs{V}$ as
$$
\abs{V}=\sum_{n=1}^\infty v_n (\cdot,\psi_n)\psi_n,
\quad
\norm{\psi_n}=1, 
\quad 
v_n\geq0, 
\quad 
\sum_{n=1}^\infty v_n=\norm{V}_{\Sch_1}<\infty. 
$$
We have
\begin{align*}
\norm{R_0(z)\abs{V}^{1/2}}_{\Sch_2}^2
& =
\norm{R_0(z)\abs{V}R_0(\overline{z})}_{\Sch_1}
\\
& =
\sum_{n=1}^\infty v_n 
\norm{R_0(z) \psi_n}^2 \\
& =
\sum_{n=1}^\infty v_n 
\int_{\bbR}\frac{d\mu_{\psi_n}(t)}{(t-x)^2+y^2}\,,
\end{align*}
where $\mu_{\psi_n}$ is the spectral measure  of $H_0$ corresponding to $\psi_n$:
$$
\mu_{\psi_n}(\delta)=(\1_\delta(H_0)\psi_n,\psi_n) \,.
$$
We obtain
\begin{align*}
\int_{\bbC} \abs{\omega(z)}
\norm{R_0(z)\abs{V}^{1/2}}_{\Sch_2}^2
dx\, dy
& =
\sum_{n=1}^\infty v_n 
\int_{\bbC} \abs{\omega(z)}
\int_{\bbR}\frac{d\mu_{\psi_n}(t)}{(t-x)^2+y^2}\,dx\, dy
\\
& \leq \pi
\sum_{n=1}^\infty v_n 
\int_{\bbR}
\sup_x \abs{\omega(z)} \,\frac{dy}{\abs{y}} \int_{\bbR} d\mu_{\psi_n}(t)
\\
& \leq \pi
\norm{V}_{\Sch_1}\int_\bbR\sup_x \abs{\omega(z)}\,\frac{dy}{\abs{y}}
<\infty \,.
\end{align*}
Of course, in the same way we get an estimate for the integral involving 
$\norm{R(z)\abs{V}^{1/2}}_{\Sch_2}^2$.
Thus, we obtain
$$
\norm{f(H)-f(H_0)}_{\Sch_1}
\leq 
C(f)\norm{V}_{\Sch_1} \,,
$$
where, according to Theorem \ref{thm.dynkin} and the remark thereafter,
$$
C(f)= \int_\bbR\sup_x \abs{\omega(z)}\,\frac{dy}{\abs{y}}\asymp \norm{f}_{B^1_{\infty,1}}.
$$
This concludes the proof of Theorem \ref{thm.p1}.
\end{proof}

\begin{proof}[Proof of Theorem~\ref{thm.p2}]
Let $f$ be represented as in \eqref{13}, where $\omega$ satisfies \eqref{16} with $s=2$. 
By a direct calculation, we have
\begin{equation}
\frac{d}{d\alpha}(H_0+\alpha V-z)^{-1}|_{\alpha=0}=-R_0(z)VR_0(z), 
\label{17}
\end{equation}
and
\begin{align}
R(z)-R_0(z)-\frac{d}{d\alpha}(H_0+\alpha V-z)^{-1}|_{\alpha=0}
& = -R(z)VR_0(z)+R_0(z)VR_0(z)
\notag \\
& =R_0(z)VR(z)VR_0(z).
\label{18}
\end{align}
From \eqref{13}, \eqref{16} and \eqref{17}  and the estimate 
$$
\norm{(H_0+\alpha V-z)^{-1}VR_0(z)}\leq \norm{V}\abs{\Im z}^{-2}\,,
$$
it is straightforward to see that 
$$
\frac{d}{d\alpha}f(H_0+\alpha V)|_{\alpha=0}
=
-\frac1\pi \int_{\bbC}\omega(z) R_0(z)VR_0(z) \,dx\, dy\,, 
$$
where the derivative exists in the operator norm and the 
integral converges absolutely in the operator norm. 
Next, by \eqref{18}, 
$$
f(H)-f(H_0)-\frac{d}{d\alpha}f(H_0+\alpha V)|_{\alpha=0}
=
\frac1\pi\int_{\bbC}\omega(z)R_0(z)VR(z)VR_0(z) \,dx\, dy\,.
$$
Finally, 
\begin{align*}
\norm{R_0(z)VR(z)VR_0(z)}_{\Sch_1}
& \leq
\norm{R_0(z)V}_{\Sch_2}\norm{R(z)}\norm{VR_0(z)}_{\Sch_2}
\\
& \leq
\frac1{\abs{\Im z}}\norm{R_0(z)V}_{\Sch_2}^2 \,,
\end{align*}
and the rest of the proof proceeds exactly as in Theorem~\ref{thm.p1}.
\end{proof}

\section{Proof of Theorems \ref{main} and \ref{thm2}}

\subsection{Some preliminary bounds}

As explained in the introduction, the proof of Theorems \ref{main} and \ref{thm2} 
relies on a combination of ideas from trace class (Lemmas \ref{hs} and \ref{diff}) 
and from smooth scattering theory (Lemma \ref{lap}). In this subsection we collect the necessary bounds.
Throughout the rest of the paper, $H_0=-\Delta$, $H=-\Delta+V$,  and $R_0(z)$, $R(z)$ are the corresponding
resolvents. 

The following two lemmas are standard in trace class scattering theory. 

\begin{lemma}\label{hs}
Let $d\geq 1$, $\kappa>d/4-1$, $E>0$ and let $\delta\subset(0,\infty)$ be a compact interval. 
Then there is a constant $C>0$ such that for all $W\in L^2(\R^d)$ and for all $z$ with $\Re z\in\delta$ and $\Im z\neq 0$ we have
$$
\left\| W R_0(z) R_0(-E)^\kappa \right\|^2_{\mathbf S_2} \leq C |\im z|^{-1} \|W\|_2^2 \,.
$$
\end{lemma}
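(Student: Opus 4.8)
\textbf{Proof proposal for Lemma~\ref{hs}.}
The plan is to reduce the Hilbert--Schmidt norm to an explicit integral over $\R^d$ via the standard convolution-kernel bound for $\norm{W g(-\Delta)}_{\Sch_2}$, and then to control that integral using the elementary inequality between the resolvent $\abs{(\abs{\p}^2-z)^{-1}}$ and a negative power of $\abs{\p}$, together with an integrable singularity in $\Im z$ coming from the pole of the free resolvent on the positive semiaxis. Concretely, for a multiplication operator $W$ and a Fourier multiplier $m(-\Delta)$ one has the well-known identity
\begin{equation*}
\norm{W\, m(-\Delta)}_{\Sch_2}^2 = (2\pi)^{-d}\norm{W}_{L^2}^2\, \norm{m}_{L^2}^2\,,
\end{equation*}
so with $m(\xi)=(\abs{\xi}^2-z)^{-1}(\abs{\xi}^2+E)^{-\kappa}$ the claim becomes
\begin{equation*}
\int_{\R^d} \frac{d\xi}{\abs{\abs{\xi}^2-z}^2\,(\abs{\xi}^2+E)^{2\kappa}} \leq C\,\abs{\Im z}^{-1}
\end{equation*}
uniformly for $\Re z\in\delta$, $\Im z\neq 0$.

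To prove this scalar estimate I would pass to the radial variable $r=\abs{\xi}$, picking up a factor $r^{d-1}\,dr$, and substitute $t=r^2$, which turns the integral into a constant times $\int_0^\infty t^{d/2-1}\,\abs{t-z}^{-2}\,(t+E)^{-2\kappa}\,dt$. I would then split the range of integration into a bounded part $t\in[0,2\sup\delta+1]$ and a tail $t>2\sup\delta+1$. On the tail, $\abs{t-z}\gtrsim t$ and $(t+E)^{-2\kappa}\lesssim t^{-2\kappa}$, so the integrand is $\lesssim t^{d/2-3-2\kappa}$, which is integrable at infinity precisely because $\kappa>d/4-1$, giving an $\abs{\Im z}$-independent bound. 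On the bounded part, $(t+E)^{-2\kappa}$ and $t^{d/2-1}$ are bounded (the latter needs $d\geq 1$ so the exponent $d/2-1>-1$ is integrable at $t=0$ as well, uniformly), so the whole thing is controlled by $\int_{\R}\abs{t-x_0}^{-2}+\abs{\Im z}^2)^{-1}\,dt$ with $x_0=\Re z$; evaluating this one-dimensional integral gives exactly $\pi\abs{\Im z}^{-1}$. Collecting the two contributions and taking the supremum over $\Re z\in\delta$ yields the constant $C=C(d,\kappa,E,\delta)$.

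The only genuinely delicate point is bookkeeping the uniformity of the constant in $z$: the bounded-range estimate must be done so that the constant does not blow up as $\Re z$ approaches the endpoints of $\delta$ or as $\Im z\to 0$, which is why I isolate the single factor $((t-x_0)^2+y^2)^{-1}$ and integrate it exactly rather than estimating it crudely. Everything else --- the Hilbert--Schmidt identity, the radial reduction, the integrability at infinity governed by the hypothesis on $\kappa$, and the harmless behaviour at $t=0$ --- is routine. I would also remark that the extra regularizing factor $R_0(-E)^\kappa$ is exactly what is needed to make the momentum-space integral converge at infinity in dimensions $d\geq 4$, whereas for small $d$ one could take $\kappa=0$; stating the lemma with a general $\kappa>d/4-1$ handles all dimensions at once.
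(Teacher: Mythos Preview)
Your proof is correct and follows essentially the same route as the paper: express the Hilbert--Schmidt norm as the explicit momentum-space integral and split it into a piece near the singularity $|\p|^2\approx\Re z$ (giving $|\Im z|^{-1}$ via the one-dimensional Lorentzian integral) and a complementary piece bounded uniformly thanks to $\kappa>d/4-1$. One small wrinkle to tidy up: on your bounded region $t^{d/2-1}$ is \emph{not} bounded near $t=0$ when $d=1$, so you cannot pass directly to $\int_\R((t-x_0)^2+y^2)^{-1}\,dt$; split off a neighbourhood of $0$ first (there $|t-z|^{-2}$ is bounded since $\Re z\in\delta\subset(0,\infty)$, and $t^{-1/2}$ is integrable), or, as the paper does, take the inner region to be $\{\Re z/2\leq|\p|^2\leq 2\Re z\}$, which avoids the origin automatically.
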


\begin{proof}
The left side is equal to
$$
(2\pi)^{-d} \int_{\R^d} |W(\x)|^2 \,d\x \int_{\R^d} \frac{d\p}{\left|\left|\p\right|^2-z\right|^2 \left(\left|\p\right|^2 +E\right)^{2\kappa}} \,. 
$$
By splitting the integral into the region where $\re z/2 \leq |\p|^2 \leq 2 \re z$ and its complement, we easily obtain the bound of the lemma.
\end{proof}

\begin{lemma}\label{diff}
Let $d\geq 4$ and let $k$ be an integer with $k>d/2-1$. Assume that $V\in \ell^1(L^2)$ is form-bounded with respect to $-\Delta$ with form bound $<1$. Then for all sufficiently large $E>0$,
$$
R(-E)^k - R_0(-E)^k \in \mathbf S_1 \,.
$$
\end{lemma}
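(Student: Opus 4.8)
\textbf{Proof plan for Lemma~\ref{diff}.}
The plan is to reduce the trace class inclusion for the $k$-th powers of the resolvents to a statement about first powers via the telescoping identity
$$
R(-E)^k - R_0(-E)^k = \sum_{j=0}^{k-1} R(-E)^{j}\bigl(R(-E)-R_0(-E)\bigr)R_0(-E)^{k-1-j}\,,
$$
and then to use the resolvent identity $R(-E)-R_0(-E)=-R(-E)VR_0(-E)$ to rewrite each summand as
$$
-\,R(-E)^{j+1}\,V\,R_0(-E)^{k-j}\,.
$$
So it suffices to show that each operator $R(-E)^{j+1}VR_0(-E)^{k-j}$, with $j+1\geq 1$ and $k-j\geq 1$, lies in $\mathbf S_1$ for $E$ large; in fact, since $R(-E)$ maps into the form domain and $k>d/2-1$ guarantees enough resolvents on each side, the plan is to split the product as a composition of two Hilbert--Schmidt factors.

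The key step is therefore a Hilbert--Schmidt bound of the form $\|\,|V|^{1/2}R_0(-E)^{m}\|_{\mathbf S_2}<\infty$ for $m\geq 1$ with $m>d/4-1/2$, and similarly with $R_0$ replaced by $R$. For the $R_0$ factor this follows by writing $|V|^{1/2}R_0(-E)^m$ in terms of its integral kernel and applying the $\ell^1(L^2)$ structure of $V$: one decomposes $|V|^{1/2}=\sum_{n\in\Z^d}|V|^{1/2}\1_{Q_n}$ and uses that $\|\1_{Q_n}R_0(-E)^m\|_{\mathbf S_2}$ is uniformly bounded (and in fact summable after pairing with the $L^2(Q_n)$ norms of $V$), since the kernel of $R_0(-E)^m$ decays exponentially away from the diagonal and is locally $L^2$ once $2m>d/2$, i.e.\ $m>d/4$; a small additional argument with $m>d/4-1/2$ is needed when one of the two factors carries fewer resolvents, which is why the hypothesis is $k>d/2-1$ (so that $j+1$ and $k-j$ together supply at least $k\geq \lceil d/2\rceil$ resolvents, more than $d/2-1$). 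For the $R$ factor, one passes from $R_0$ to $R$ using the form-boundedness of $V$ with form bound $<1$: for $E$ large, $R(-E)^{1/2}$ and $R_0(-E)^{1/2}$ are comparable as form factors, so $\|\,|V|^{1/2}R(-E)^m\|_{\mathbf S_2}\leq C\|\,|V|^{1/2}R_0(-E)^m\|_{\mathbf S_2}$, possibly after absorbing one resolvent power to handle the operator-norm comparison $\|R(-E)^{1/2}(-\Delta+1)^{1/2}\|<\infty$.

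The main obstacle I expect is the bookkeeping in the Hilbert--Schmidt estimate near the diagonal: $R_0(-E)^m$ has an $L^2_{\loc}$ kernel only when $2m>d/2$, and for $d\geq 4$ one has $m=1$ exactly at the borderline $2=d/2$ when $d=4$, so the case $d=4$, $m=1$ must be treated slightly more carefully (the kernel of $R_0(-E)$ in $\R^4$ behaves like $|\x-\y|^{-2}$, which is \emph{not} locally square integrable). This is precisely why one wants to always keep at least two resolvent powers on the side that is paired with $|V|^{1/2}\in \ell^1(L^2)$ when $d=4$, or equivalently to invoke Lemma~\ref{hs} with $\kappa>d/4-1$ so that the extra factor $R_0(-E)^\kappa$ restores local square integrability; the telescoping decomposition above, together with $k>d/2-1$, is arranged exactly so that this is always possible. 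Once the Hilbert--Schmidt bounds are in place, the Hölder inequality $\mathbf S_2\cdot\mathbf S_2\subset \mathbf S_1$ finishes the proof.
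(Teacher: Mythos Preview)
Your telescoping reduction is correct, but the claimed Hilbert--Schmidt factorization of each summand does not go through, and the sentence ``the telescoping decomposition above, together with $k>d/2-1$, is arranged exactly so that this is always possible'' is where the argument breaks. The telescoping sum is forced, not arranged: it necessarily contains the endpoint terms $j=0$ and $j=k-1$, i.e.\ $R(-E)\,V\,R_0(-E)^k$ and $R(-E)^k\,V\,R_0(-E)$, each of which has only \emph{one} resolvent on one side. For $d\geq 4$ the factor $|V|^{1/2}R_0(-E)$ (and likewise $|V|^{1/2}R(-E)$) is \emph{not} Hilbert--Schmidt: the momentum integral $\int_{\R^d}(|\p|^2+E)^{-2}\,d\p$ diverges for $d\geq 4$, and the $\ell^1(L^2)$ hypothesis on $V$ does not rescue this. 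Nor can you shift the burden entirely to the other side: for the minimal admissible $k$ one has only $k+1>d/2$ total resolvent powers, and the Birman--Solomyak criterion for $V R_0(-E)^k\in\mathbf S_1$ requires $2k>d$, which fails e.g.\ for $d=4$, $k=2$. Invoking Lemma~\ref{hs} with $\kappa>d/4-1$ does not help either, since on the bad side you simply do not have $1+\kappa$ resolvent powers available when $j=0$ (or $j=k-1$). Form-boundedness lets you trade $R^{1/2}$ for $R_0^{1/2}$ in operator norm, but that still leaves only a half-power of $R_0$ on the short side, which is not enough.

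The paper sidesteps this obstruction entirely: instead of telescoping, it quotes a theorem of Reed--Simon (closely related to earlier work of Yafaev) which reduces $R(-E)^k-R_0(-E)^k\in\mathbf S_1$ to the single free-resolvent condition $R_0(-E)^{1/2}\,V\,R_0(-E)^{k+1/2}\in\mathbf S_1$. That condition is then obtained by complex interpolation between $R_0(-E)^{k+1}V\in\mathbf S_1$ and $VR_0(-E)^{k+1}\in\mathbf S_1$, each of which follows from the Birman--Solomyak $\ell^1(L^2)$ criterion using exactly $2(k+1)>d$. The Reed--Simon step is the nontrivial ingredient that effectively redistributes resolvent powers across $V$ and passes from $R$ to $R_0$; this is precisely what your direct $\mathbf S_2\cdot\mathbf S_2$ splitting of the telescoped terms cannot accomplish.
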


\begin{proof}
We shall use a result of Reed and Simon \cite{ReSi} (see also \cite[Thm. XI.12]{ReSi3}), 
closely related to an earlier result of Yafaev \cite{Ya0}. 
According to this result our assertion follows from the fact that
\begin{equation}
R_0(-E)^{1/2} V R_0(-E)^{k+1/2} \in \mathbf S_1 \,.
\label{8b}
\end{equation}
The inclusion \eqref{8b} follows by interpolation from 
$$
R_0(-E)^{k+1} V\in\Sch_1, 
\quad 
V R_0(-E)^{k+1} \in\Sch_1.
$$
Finally, the last two inclusions were proven by Birman and Solomyak
(see \cite{BS2} or \cite[Thm. 4.5]{Si}). 
\end{proof}

The next assertion is a form of the limiting absorption principle. 

\begin{lemma}\label{lap}
Let $d\geq 1$ and let
\begin{align*}
& p=1 & \text{if}\ d=1 \,,\\
& 1<p\leq 3/2 & \text{if}\ d=2 \,,\\
& d/2\leq p\leq (d+1)/2 & \text{if}\ d\geq 3 \,.
\end{align*}
Assume that $V\in L^p(\R^d)$. 
Then for any compact interval $\delta\subset(0,\infty)$ there is a constant 
$C>0$ such that for any $z\in\C$ with $\re z\in \delta$ and $\im z\neq 0$,
$$
\left\| \sqrt{|V|} R(z) \sqrt{|V|} \right\| \leq C \,.
$$
\end{lemma}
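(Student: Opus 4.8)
The plan is to prove the bound $\left\| \sqrt{|V|} R(z) \sqrt{|V|} \right\| \leq C$ uniformly for $\re z \in \delta$ and $\im z \neq 0$ by combining the analogous free estimate with a resolvent (Birman--Schwinger) identity. The first step is to establish the free limiting absorption principle in the relevant $L^p$ form: under the stated hypotheses on $p$ and $d$, one has
\[
\sup_{\re z\in\delta,\ \im z\neq 0}\left\| \sqrt{|V|}\, R_0(z)\, \sqrt{|V|} \right\| \leq C_0 < \infty .
\]
For $d\ge 3$ this follows from the uniform Sobolev / Kenig--Ruiz--Sogge inequality $\|R_0(z)\|_{L^{q'}\to L^q}\le C$ with $1/q - 1/q' = 2/(d+1)$ (valid on the full line, hence certainly for $\re z\in\delta$), after factoring $\sqrt{|V|}R_0(z)\sqrt{|V|}$ through $L^q\cap$ its dual and using $V\in L^p$ with $p = q/2$ in the allowed range $d/2\le p\le (d+1)/2$; for $d=1$ and $d=2$ it follows from the explicit kernel of $R_0(z)$ together with Young's / Hölder's inequality (in $d=1$ the kernel is bounded, so $V\in L^1$ suffices; in $d=2$ the logarithmic singularity of the kernel is handled by $V\in L^p$, $p>1$, and the uniform decay of the kernel away from the diagonal by $V\in L^1$, but since here we only assume $V\in L^p$ with $1<p\le 3/2$ one should more carefully use the $L^{p'}\to L^p$ mapping properties of $R_0(z)$ near the diagonal plus a compactness-at-infinity argument — I would cite Yafaev's book for the clean statement rather than reprove it).

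The second step is the perturbative passage from $R_0$ to $R$. Write $G_0(z) := \sqrt{|V|}\, R_0(z)\, \sqrt{|V|}$ and recall the standard identity, valid because $V$ is form-bounded with form bound $<1$,
\[
\sqrt{|V|}\, R(z)\, \sqrt{|V|} = G_0(z)\bigl( I + \sgn(V)\, G_0(z) \bigr)^{-1},
\]
where $\sgn(V)$ is the bounded (unitary on $\overline{\ran}\sqrt{|V|}$) multiplication operator implementing $V = \sgn(V)|V|$. Thus it suffices to show that $I + \sgn(V)G_0(z)$ is boundedly invertible, uniformly for $\re z\in\delta$, $\im z\neq 0$. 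Since $\|G_0(z)\|$ is bounded but not necessarily small, invertibility cannot be gotten by Neumann series directly; instead I would argue as follows.

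The third step handles the uniform invertibility. Since $H = -\Delta + V$ has no eigenvalues or resonances in $(0,\infty)$ (absence of positive eigenvalues for such potentials is classical — Kato, or the unique continuation results; this is where the hypothesis $\delta\subset(0,\infty)$ and the $L^p$ smallness at infinity enter), the operator $I + \sgn(V)G_0(\lambda\pm i0)$ is invertible for each $\lambda\in\delta$. Combined with (i) continuity of $z\mapsto G_0(z)$ in operator norm up to the real axis from each half-plane — which itself follows from the free LAP estimate plus a density/interpolation argument, using that $\sqrt{|V|}R_0(z)\sqrt{|V|}$ is compact (via Lemma~\ref{hs}-type Hilbert--Schmidt bounds after an extra resolvent factor, or via the $L^p$ compactness of $\sqrt{|V|}R_0(z)\sqrt{|V|}$ when $V\in L^p$ with $p$ in the interior of the stated range) so that the inverse depends norm-continuously on $G_0$ wherever it exists — and (ii) compactness of $\overline\delta$, one gets $\sup_{\re z\in\delta,\ \im z\neq 0}\|(I+\sgn(V)G_0(z))^{-1}\| < \infty$ by a covering argument. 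Feeding this back into the resolvent identity gives the claimed uniform bound on $\sqrt{|V|}R(z)\sqrt{|V|}$.

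I expect the main obstacle to be the third step: proving that $I + \sgn(V)G_0(z)$ is \emph{uniformly} invertible up to the real axis. The pointwise invertibility at each energy is the analytic Fredholm alternative plus absence of positive eigenvalues/resonances, but promoting this to a uniform bound requires norm-continuity of $G_0$ up to the spectrum, which is exactly the quantitative content of the limiting absorption principle and is the subtle point (the endpoint cases $p=d/2$ and $p=(d+1)/2$, where compactness of $\sqrt{|V|}R_0\sqrt{|V|}$ may fail, need slightly more care — e.g. splitting $V$ into a bounded compactly supported piece plus a small-in-$L^p$ remainder). A secondary technical point worth flagging is that in $d\ge 3$ the clean uniform Sobolev inequality gives everything on all of $\mathbb C\setminus\mathbb R$ at once, so the restriction to $\re z\in\delta$ is needed only to stay away from the threshold $0$ and to invoke absence of embedded eigenvalues on a compact set.
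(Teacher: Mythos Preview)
Your overall strategy is sound and is essentially the Agmon/Birman--Schwinger bootstrap: free LAP $+$ compactness of $G_0(z)=\sqrt{|V|}R_0(z)\sqrt{|V|}$ $+$ analytic Fredholm $+$ absence of embedded spectrum $\Rightarrow$ uniform invertibility of $I+\sgn(V)G_0(z)$. This is in fact (a special case of) what Ionescu--Schlag carry out in \cite{IoSc}. The paper takes the shorter route of quoting the finished product: it first proves an $L^r\to L^{r'}$ bound for the \emph{full} resolvent $R(z)$ with $r=2p/(p+1)$ (Lemma~\ref{key}), citing \cite{IoSc} directly for $d\ge 2$ and using Jost solutions for $d=1$, and then obtains the sandwiched bound by H\"older. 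So where you redo the perturbative step, the paper outsources it.

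There is, however, one genuine gap in your sketch. You write that absence of positive eigenvalues ``is classical --- Kato, or the unique continuation results''. Kato's theorem requires pointwise decay $V(\x)=o(|\x|^{-1})$, which is \emph{not} implied by $V\in L^p$ for the stated range; a bare $L^p$ assumption allows local singularities and gives no pointwise control at infinity. The result you actually need is the Koch--Tataru theorem \cite{KoTa} (Carleman estimates in $L^p$ Sobolev spaces), which is precisely what the paper invokes and is substantially harder than the classical results. Without it your Fredholm step does not close: you cannot rule out $\ker(I+\sgn(V)G_0(\lambda+i0))\neq\{0\}$ on $\delta$. A second, smaller point: your $d=2$ free estimate is left hanging (``I would cite Yafaev''); the paper handles $d=2$ via the $L^r\to L^{r'}$ resolvent bound from \cite{Fr}, and in any case the near-diagonal logarithmic singularity is not controlled by $V\in L^p$ with $p\le 3/2$ via Young alone --- you really do need the restriction/uniform Sobolev machinery here as well.
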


The proof of the lemma only under $L^p$ conditions on $V$ is not completely standard and, for $d\geq 2$, relies on some results in harmonic analysis. It is essentially contained in the papers \cite{IoSc,KoTa}. We defer a discussion of the proof to Subsection \ref{sec:lap}.

For the moment we note that, if the $L^p$ condition on $V$ is replaced by the pointwise condition \eqref{2} with $\rho>1$, then the bound of Lemma \ref{lap} follows directly from the classical limiting absorption principle \eqref{10}.

We now combine the bounds from Lemmas \ref{hs} and \ref{lap} and obtain

\begin{corollary}\label{lma}
Let $d\geq 1$ and 
\begin{align*}
& V \in L^1(\R) & \text{if}\ d=1 \,, \\
& V \in L^1(\R^2)\cap L^p(\R^2) \ \text{for some}\ p>1 & \text{if}\ d=2\,,\\
& V\in L^1(\R^d)\cap L^{d/2}(\R^d) & \text{if}\ d\geq 3 \,.
\end{align*}
For every compact interval $\delta\subset(0,\infty)$, every $\kappa>d/4-1$ and every $E>0$ there is a constant $C>0$ such that for all $z$ with $\Re z\in\delta$ and $\abs{\Im z}\leq1$, $\Im z\not=0$, we have
$$
\norm{R_0(-E)^{\kappa} (R(z)-R_0(z)) R_0(-E)^{\kappa}}_{\Sch_1}
\leq
C/\abs{\Im z} \,.
$$
\end{corollary}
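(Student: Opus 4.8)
The plan is to write the difference of resolvents using the resolvent identity and then factor through $\sqrt{|V|}$, splitting the trace norm into a product of Hilbert--Schmidt factors controlled by Lemma~\ref{hs} and a bounded middle factor controlled by Lemma~\ref{lap}. Concretely, the second resolvent identity gives
$$
R(z)-R_0(z) = -R_0(z) V R(z) = -R_0(z)\sqrt{|V|}\,\sgn(V)\sqrt{|V|}\,R(z),
$$
and applying it once more to the rightmost factor,
$$
R(z)-R_0(z) = -R_0(z)\sqrt{|V|}\,\sgn(V)\sqrt{|V|}\,R_0(z) + R_0(z)\sqrt{|V|}\,\sgn(V)\sqrt{|V|}\,R(z)\sqrt{|V|}\,\sgn(V)\sqrt{|V|}\,R_0(z).
$$
Conjugating by $R_0(-E)^\kappa$ and taking the trace norm, the first term is bounded by $\|R_0(-E)^\kappa R_0(z)\sqrt{|V|}\|_{\Sch_2}\,\|\sqrt{|V|}R_0(z)R_0(-E)^\kappa\|_{\Sch_2}$, and the second by $\|R_0(-E)^\kappa R_0(z)\sqrt{|V|}\|_{\Sch_2}\,\|\sqrt{|V|}R(z)\sqrt{|V|}\|\,\|\sqrt{|V|}R_0(z)R_0(-E)^\kappa\|_{\Sch_2}$, since $\|\sgn(V)\|\le 1$.

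Next I would estimate the two Hilbert--Schmidt factors by Lemma~\ref{hs} (with $W=\sqrt{|V|}$, so $\|W\|_2^2=\|V\|_1$): each is $\le C|\im z|^{-1/2}\|V\|_1^{1/2}$ for $\re z\in\delta$, $\im z\ne 0$, using the hypothesis $\kappa>d/4-1$ and $V\in L^1$. This is where the $L^1$ assumption on $V$ (valid in all the listed cases) is used. The bounded middle factor is handled by Lemma~\ref{lap}: here one checks that in each dimension the stated hypotheses on $V$ imply $V$ lies in an admissible $L^p$ space for that lemma --- for $d=1$, $V\in L^1$; for $d=2$, $V\in L^p$ with $1<p\le 3/2$ follows by interpolation from $V\in L^1\cap L^p$ with $p>1$ (if the given $p$ exceeds $3/2$, interpolate down); for $d\ge 3$, $V\in L^{d/2}$ and one must verify $d/2\le (d+1)/2$, which is automatic. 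Combining, each of the two terms is bounded by $C|\im z|^{-1/2}\cdot|\im z|^{-1/2} = C|\im z|^{-1}$ (the middle factor contributing only a constant), uniformly for $\re z\in\delta$, $|\im z|\le 1$. This is exactly the claimed bound.

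The only genuinely delicate point is bookkeeping the $L^p$ interpolation in $d=2$: Lemma~\ref{lap} requires $p\le 3/2$, while the hypothesis only furnishes some $p>1$. One resolves this by noting $V\in L^1\cap L^{p}$ implies $V\in L^{q}$ for all $q\in[1,p]$, and also for $q$ between $1$ and any exponent $\le p$ --- in particular if the available $p\ge 3/2$ one simply uses $q=3/2\in[1,p]$, and if $p<3/2$ one uses $q=p$ directly; either way an admissible exponent for Lemma~\ref{lap} is available. A similar but easier remark covers $d\ge 3$, where $V\in L^{d/2}$ is directly admissible. Everything else is routine: the resolvent identities are algebraic, the Hölder inequality for Schatten norms ($\|AB\|_{\Sch_1}\le\|A\|_{\Sch_2}\|B\|_{\Sch_2}$ and $\|ABC\|_{\Sch_1}\le\|A\|_{\Sch_2}\|B\|\,\|C\|_{\Sch_2}$) is standard, and $R_0(-E)^\kappa$ commutes with $R_0(z)$ so it can be grouped with either Hilbert--Schmidt factor as needed.

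One should also note that $R(z)$ is well defined and bounded for $\re z\in\delta\subset(0,\infty)$, $\im z\ne 0$, since $z$ is non-real and $H$ is self-adjoint (the form-boundedness of $V$ with form bound $<1$, which holds under the hypotheses, ensures $H$ is a well-defined self-adjoint operator bounded below); thus all the operator products above make sense. The restriction $|\im z|\le 1$ is only used to absorb the boundedness of $R_0(z)$, $R(z)$ as operators into the constant $C$ when needed, and plays no essential role beyond keeping constants uniform.
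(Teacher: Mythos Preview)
Your proof is correct and follows essentially the same approach as the paper: iterate the resolvent identity to obtain the symmetric factorization $R(z)-R_0(z)=-R_0(z)VR_0(z)+R_0(z)VR(z)VR_0(z)$, then bound the trace norm by the product of two Hilbert--Schmidt factors $\|R_0(-E)^\kappa R_0(z)\sqrt{|V|}\|_{\Sch_2}$ (controlled by Lemma~\ref{hs} with $W=\sqrt{|V|}\in L^2$) and the bounded middle piece $\|\sqrt{|V|}R(z)\sqrt{|V|}\|$ (controlled by Lemma~\ref{lap}). The paper packages the two terms as $R_0(z)\sqrt{|V|}\bigl(-1+\sqrt{V}R(z)\sqrt{|V|}\bigr)\sqrt{V}R_0(z)$ rather than treating them separately, but this is purely cosmetic. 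Your explicit discussion of the $L^p$ interpolation needed in $d=2$ (reducing to an exponent in $(1,3/2]$ when the given $p$ exceeds $3/2$) is a detail the paper leaves implicit.
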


Note that if $d\leq 3$, then we can choose $\kappa=0$.

\begin{proof}
It is well-known that under the conditions of the corollary, $V$ is infinitesimally form-bounded with respect to $-\Delta$ and therefore $H$ can be defined via a quadratic form with form domain $H^1(\R^d)$. Iterating the resolvent identity, we obtain:
\begin{align}
R(z)-R_0(z)
& =
-R(z)VR_0(z)
=
-R_0(z)VR_0(z)
+
R_0(z)VR(z)VR_0(z)
\notag \\
& =
R_0(z)\sqrt{|V|} (1+ \sqrt{V} R(z)\sqrt{|V|}) \sqrt V R_0(z) \,,
\label{9}
\end{align}
where we used the notation $\sqrt{V} = (\sgn V) \sqrt{|V|}$. By Lemmas \ref{hs} and \ref{lap} we obtain
\begin{align*}
& \norm{R_0(-E)^\kappa (R(z)-R_0(z)) R_0(-E)^\kappa }_{\Sch_1} \\
& \quad \leq
\norm{R_0(-E)^\kappa R_0(z)\sqrt{|V|}}_{\Sch_2}^2
\norm{1+\sqrt{V}R(z)\sqrt{|V|}} \\
& \quad \leq
C/\abs{\Im z} \,,
\end{align*}
as claimed.
\end{proof}


\subsection{Proof of Theorems \ref{main} and \ref{thm2}}

We are now in position to prove our main results.

\begin{proof}[Proof of Theorem~\ref{main}]
First we consider the case $d\leq 3$.
As in the proof of Theorem~\ref{thm.p1}, we get 
$$
f(H)-f(H_0)=\frac1\pi \int_\bbC \omega(z) (R(z)-R_0(z))\,dx\, dy, 
$$
where $\omega$ satisfies \eqref{15} with $s=1$. 
We can also ensure that $\supp \omega\subset \{z: \Re z\in \delta\}$ 
for some compact interval $\delta\subset (0,\infty)$. 
Then, using Corollary~\ref{lma} with $\kappa=0$, we obtain
\begin{align*}
\norm{f(H)-f(H_0)}_{\Sch_1}
& \leq
\frac1\pi
\int_{\bbC}
\abs{\omega(z)} \norm{R(z)-R_0(z)}_{\Sch_1}dx\, dy
\\
& \leq
\frac1\pi C  \int_{\bbC}
\abs{\omega(z)}\frac1{\abs{\Im z}}dx\, dy<\infty, 
\end{align*}
as required. 

Next, consider the case of dimensions $d\geq4$. 
Let $k$ be an integer such that $k>d/2-1$. (Note that $k\geq 2$ since $d\geq 4$.) 
Then, by Lemma \ref{diff},
\begin{equation}
R(-E)^k- R_0(-E)^k\in\Sch_1
\label{21}
\end{equation}
for all sufficiently large $E>0$. 
Fix such an $E$ and let $g(\lambda)=(\lambda+E)^{2k} f(\lambda)$. 
Clearly, $g\in B_{1,1}^1(\bbR)$ and $g$ is compactly supported.
Thus, we can represent $g$ in the same form as $f$, 
$$
g(\lambda)=\frac1\pi \int_{\bbC}\omega_g(z)(\lambda-z)^{-1} \,dx\, dy
$$
with 
$$
\int_\bbC \frac{\abs{\omega_g(z)}}{\abs{\Im z}}dx\, dy<\infty.
$$
We have 
\begin{align*}
f(H)-f(H_0)
& =
R(-E)^k g(H)R(-E)^k -R_0(-E)^k g(H_0)R_0(-E)^k
\\
& =\left(R(-E)^k-R_0(-E)^k\right)g(H)R(-E)^k
\\
& \quad +
R_0(-E)^k g(H)\left( R(-E)^k-R_0(-E)^k\right)
\\
& \quad +
R_0(-E)^k\left(g(H)-g(H_0)\right)R_0(-E)^k \,. 
\end{align*}
The first two terms in the right side are trace class operators by \eqref{21}. For the third term, we have
\begin{multline}
R_0(-E)^k\left(g(H)-g(H_0)\right)R_0(-E)^k
\\
=
\frac1\pi 
\int_\bbC \omega_g(z) R_0(-E)^k\left(R(z)-R_0(z)\right)R_0(-E)^k\, dx\, dy, 
\label{28}
\end{multline}
and so again using Corollary~\ref{lma} (with $\kappa=k$), we get the required result. 

Finally, let us prove the statement concerning the function $f_{\gamma,a}$.
By multiplying  $f_{\gamma,a}$ by suitable cutoff functions, it is easy to represent it as
$f_{\gamma,a}=f_0+f$, where $f_0$ is compactly supported on $(0,\infty)$ and belongs to $B^1_{1,1}(\bbR)$, 
and $f$ is infinitely smooth and vanishes for $\lambda>a$. Since only the values of $f$ on 
the spectra of $H_0$, $H$ 
 are relevant, we may assume that $f\in C_0^\infty(\bbR)$. Thus, it remains to prove that
$$
f(H)-f(H_0)\in\Sch_1,  \quad f\in C_0^\infty(\bbR).
$$
This statement is well known but for completeness let us indicate the proof by the same method as above.
A function $f\in C_0^\infty(\bbR)$ can be represented as in \eqref{13}, where $\omega$ satisfies \eqref{15} 
with any $s>0$; for us $s=3$ suffices. 
Next, we repeat the proof of the theorem for the case $d\geq 4$, but instead of applying Corollary \ref{lma} to \eqref{28} we estimate as follows, using the resolvent identity in the form \eqref{9}:
\begin{align*}
& \left\| R_0(-E)^k\left(R(z)-R_0(z)\right)R_0(-E)^k \right\| \\
& \quad \leq \norm{R_0(-E)^\kappa R_0(z)\sqrt{|V|}}_{\Sch_2}^2
\left( 1+ \| \sqrt{|V|} R(-E)^{1/2}\|^2 \|(H+E)R(z)\| \right)
\end{align*}
The first factor on the right side can be estimated by means of Lemma \ref{hs}. Since $V$ is infinitesimally form-bounded with respect to $H_0$, it is also infinitesimally form-bounded with respect to $H$ and therefore $\| \sqrt{|V|} R(-E)^{1/2}\|<\infty$. Finally,
$$
\|(H+E)R(z)\| \leq C \left(|\Im z|^{-1} + 1\right) \,.
$$
This implies that 
\begin{align*}
& \left\| R_0(-E)^k\left(R(z)-R_0(z)\right)R_0(-E)^k \right\| \leq C'|\Im z|^{-2} \left(|\Im z|^{-1} + 1\right) \,.
\end{align*}
Again, combining this with \eqref{28}, we obtain the required result. (The term $+1$ in the last bound is irrelevant with respect to $|\Im z|^{-1}$ since, as stated in Theorem \ref{thm.dynkin}, $\omega$ may be chosen to have compact support.)
\end{proof}

\begin{proof}[Proof of Theorem~\ref{thm2}]
We follow the proof of Theorem~\ref{thm.p2}. 
Let $f$ be represented as in \eqref{13} with $\omega$ satisfying
\eqref{15} with $s=2$ and 
$\supp \omega\subset\{z: \Re z\in\delta\}$, $\delta\subset(0,\infty)$.
As in Theorem~\ref{thm.p2}, 
we get the representation
$$
f(H)-f(H_0)-\frac{d}{d\alpha}f(H_0+\alpha V)|_{\alpha=0}
\\
=
-\frac1\pi\int_{\bbC}\omega(z)R_0(z)VR(z)VR_0(z) \,dx\, dy \,.
$$
According to Lemma \ref{hs},
\begin{equation}
\norm{R_0(z)VR(z)VR_0(z)}_{\Sch_1}
\leq
\abs{\Im z}^{-1} \norm{R_0(z)V}_{\Sch_2}^2
\leq C(\delta)\abs{\Im z}^{-2}.
\label{29}
\end{equation}
Now combining this with \eqref{15}, we obtain the required statement.

To prove the statement concerning the function  $f_{\gamma,a}$, 
as in the proof of Theorem~\ref{main} we represent $f_{\gamma,a}=f_0+f$
and reduce the proof to the inclusion
$$
f(H)-f(H_0)-\frac{d}{d\alpha} f(H_0+\alpha V)|_{\alpha=0}\in\Sch_1, 
\quad
f\in C_0^\infty(\bbR).
$$
This is proven in the same way by using the estimate
\begin{align*}
\norm{R_0(z)VR(z)VR_0(z)}_{\Sch_1}
& =
\norm{R_0(z)(H_0+I)R_0(-1)VR(z)VR_0(-1)(H_0+I)R_0(z)}_{\Sch_1}
\\
& \leq
\norm{R_0(z)(H_0+I)}^2\norm{R_0(-1)V}_{\Sch_2}^2\norm{R(z)}
\\
& \leq
C\abs{\Im z}^{-1} \left(|\Im z|^{-1} + 1\right)^2
\end{align*}
instead of \eqref{29}. This completes the proof of Theorem \ref{thm2}.
\end{proof}


\subsection{Proof of Lemma \ref{lap}}\label{sec:lap}

In this subsection we shall prove

\begin{lemma}\label{key}
Let $d\geq 1$ and let
\begin{align*}
& p=1 & \text{if}\ d=1 \,,\\
& 1<p\leq 3/2 & \text{if}\ d=2 \,,\\
& d/2\leq p\leq (d+1)/2 & \text{if}\ d\geq 3 \,.
\end{align*}
Assume that $V\in L^p(\R^d)$. Then for any compact interval $\delta\subset(0,\infty)$ there is a constant $C>0$ such that for any $z\in\C$ with $\re z\in\delta$ and $0<|\im z|\leq 1$,
$$
\left\| R(z) \right\|_{L^r \to L^{r'}} \leq C \,,
$$
where $r=2p/(p+1)$ and $r'$ is the dual exponent, $\frac1r+\frac1{r'}=1$.
\end{lemma}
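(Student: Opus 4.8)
The plan is to prove Lemma~\ref{key} by first establishing the corresponding resolvent bound for the free operator $H_0=-\Delta$ and then using a perturbative (Birman--Schwinger / Neumann series) argument to transfer it to $H=-\Delta+V$. For the free resolvent, the key input is the \textbf{uniform Sobolev inequality} of Kenig--Ruiz--Sogge: for $d\geq 3$ and $r=2d/(d+2)$ one has $\|R_0(z)\|_{L^r\to L^{r'}}\leq C$ uniformly in $z\in\C\setminus[0,\infty)$ (indeed uniformly in all of $\C$ away from $0$ when one restricts to a compact spectral window, but the KRS bound is already uniform over the whole plane). More generally, for $d/2\leq p\leq (d+1)/2$ and $r=2p/(p+1)$, the endpoint Stein--Tomas / Kenig--Ruiz--Sogge machinery gives the uniform bound $\|R_0(z)\|_{L^r\to L^{r'}}\leq C$ on any spectral strip $\re z\in\delta$, $|\im z|\leq 1$; for $d=1$ this is elementary since $R_0(z)$ has an explicit kernel that is bounded and integrable, and for $d=2$ one uses the corresponding two-dimensional estimate (with the restriction $p\leq 3/2$, i.e.\ $r\leq 6/5$, accounting for the logarithmic failure at the endpoint). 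I would simply cite \cite{KeRuSo} (and \cite{IoSc,KoTa} as the paper already indicates) for this free bound rather than reprove it.

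Next I would run the perturbative argument. Write the (factorized) resolvent identity as in \eqref{9},
\[
R(z) = R_0(z) - R_0(z)\sqrt{|V|}\,\bigl(1+\sqrt{V}R(z)\sqrt{|V|}\bigr)\sqrt{V}\,R_0(z),
\]
or equivalently $R(z) = R_0(z)\bigl(1+\sqrt{V}R_0(z)\sqrt{|V|}\bigr)^{-1}$-type Birman--Schwinger expansions acting between $L^r$ and $L^{r'}$. Sandwiching the free bound with multiplication by $\sqrt{|V|}$: since $V\in L^p$ means $\sqrt{|V|}\in L^{2p}$, H\"older gives that multiplication by $\sqrt{|V|}$ maps $L^{r'}\to L^2$ and $L^2\to L^r$ (one checks $\tfrac1{r'}+\tfrac1{2p}=\tfrac12$ and $\tfrac12+\tfrac1{2p}=\tfrac1r$, which is exactly the choice $r=2p/(p+1)$). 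Hence $\sqrt{V}R_0(z)\sqrt{|V|}$ is a bounded operator on $L^2$, and in fact Lemma~\ref{lap} itself (which is what we are proving) is the statement that $\sqrt{|V|}R(z)\sqrt{|V|}$ is bounded on $L^2$ — so the two lemmas must be proved together. The clean way is: first prove boundedness of $\sqrt{|V|}R(z)\sqrt{|V|}$ on $L^2$ uniformly on the strip (this is Lemma~\ref{lap}), using the limiting absorption principle for the free resolvent between weighted/$L^r$ spaces plus a compactness-and-analytic-Fredholm argument to invert $1+\sqrt{V}R_0(z)\sqrt{|V|}$ away from the (empty, by the absence of embedded eigenvalues / positive resonances) singular set; then feed that back into the factorized identity to get the $L^r\to L^{r'}$ bound of Lemma~\ref{key}.

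Concretely the steps are: (i) state the uniform free bound $\|R_0(z)\|_{L^r\to L^{r'}}\leq C$ and the weighted LAP consequence $\|\sqrt{|V|}R_0(z)\sqrt{|V|}\|_{L^2\to L^2}\leq C$ with continuous boundary values as $\im z\to 0^{\pm}$; (ii) show $\sqrt{|V|}R_0(z)\sqrt{|V|}$ is compact on $L^2$ for $z$ in the strip (approximate $V$ by bounded compactly supported potentials, for which compactness is classical, and pass to the limit in operator norm using (i) with a slightly better exponent or an interpolation gain); (iii) invoke analytic Fredholm theory to conclude $1+\sqrt{V}R_0(z)\sqrt{|V|}$ is boundedly invertible on $L^2$ for all $z$ in the closed strip, the only possible obstruction being a nontrivial kernel at a real point $\lambda\in\delta\subset(0,\infty)$, which is excluded because $-\Delta+V$ with $V\in L^p$, $p\leq(d+1)/2$, has no positive eigenvalues (Kato / Ionescu--Jerison) and no positive resonances (this is precisely where the upper restriction on $p$ is used); (iv) conclude $\sqrt{|V|}R(z)\sqrt{|V|}=\sqrt{|V|}R_0(z)\sqrt{|V|}\,(1+\sqrt{V}R_0(z)\sqrt{|V|})^{-1}$ is uniformly bounded on $L^2$, giving Lemma~\ref{lap}; (v) substitute into the factorized resolvent identity and use the $L^r\to L^2$ and $L^2\to L^{r'}$ mapping of $\sqrt{|V|}$ together with the free $L^r\to L^{r'}$ bound to obtain Lemma~\ref{key}. \textbf{The main obstacle} is step~(iii): ruling out the singular set, i.e.\ the absence of positive eigenvalues and resonances of $-\Delta+V$ under only an $L^p$ hypothesis on $V$ with $p$ as low as $d/2$ (for $d\geq 3$) or $3/2$ (for $d=2$). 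This is genuinely nontrivial harmonic analysis — it rests on the unique continuation / Carleman estimates behind the Ionescu--Jerison and Koch--Tataru theorems — and is exactly the reason the paper says the proof ``relies on some results in harmonic analysis'' and attributes it to \cite{IoSc,KoTa}; in a self-contained write-up I would quote those results as black boxes. A secondary technical point is the uniformity of all bounds up to the real axis (the limiting absorption principle proper), which for the free operator again comes from the Kenig--Ruiz--Sogge estimate being uniform in $\im z$, but requires a little care in the $d=2$ endpoint.
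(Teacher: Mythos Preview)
Your outline is correct and would produce a valid proof, but it is organized differently from the paper's and does more work than the paper does.

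For $d\geq 2$ the paper does \emph{not} run the Birman--Schwinger/analytic Fredholm argument itself. Instead it verifies that $V$ is an ``admissible perturbation'' in the sense of \cite{IoSc} (a short maximal-function computation using $q_0\leq p\leq (d+1)/2$), and then invokes \cite[Thm.~1.3]{IoSc} as a black box: that theorem already gives the uniform bound $\sup\|R(z)\|_{X\to X'}<\infty$ for a certain Banach space $X$ with $W^{-1/(d+1),\,2(d+1)/(d+3)}\subset X$, on any compact $\delta\subset(0,\infty)$ avoiding eigenvalues of $H$. The paper then uses a Sobolev embedding to pass from $X\to X'$ to $L^r\to L^{r'}$, and finally cites \cite{KoTa} to check that $H$ has no positive eigenvalues, so that the restriction on $\delta$ is vacuous. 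Thus the entire Fredholm/compactness/resonance-exclusion machinery that you outline in steps (ii)--(iv) is already packaged inside \cite{IoSc}; you are effectively re-proving the relevant part of that paper rather than citing it. Your route is more self-contained but longer, and as you correctly identify, the genuine hard inputs (the uniform Kenig--Ruiz--Sogge bound for $R_0$ and the Koch--Tataru absence of embedded eigenvalues, together with the bootstrap showing that an $L^2$ solution of $(1+\sqrt{V}R_0(\lambda+i0)\sqrt{|V|})\psi=0$ produces a genuine $L^2$ eigenfunction) are the same either way.

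For $d=1$ the paper again takes a more direct route than you propose: rather than the perturbative argument, it writes the kernel of $R(k^2)$ explicitly in terms of the Jost solutions $\theta_\pm$ and the Wronskian $w(k)$, and reads off the $L^1\to L^\infty$ bound from the classical facts that $e^{\mp ikx}\theta_\pm(x,k)$ are uniformly bounded and $w(k)\neq 0$ for $k$ away from $0$. Your perturbative approach would also work in $d=1$, but the Jost-function argument is shorter and avoids the Fredholm step entirely.

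A minor point on logical flow: you derive Lemma~\ref{lap} first and then feed it into the factorized identity to get Lemma~\ref{key}; the paper goes the other way, proving Lemma~\ref{key} and observing (via the H\"older mapping $\sqrt{|V|}:L^{r'}\to L^2$) that it implies Lemma~\ref{lap}. The two orderings are equivalent.
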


We claim that this lemma implies Lemma \ref{lap}. 
Indeed, if $V\in L^p$ then H\"older's inequality implies that multiplication by 
$\sqrt{|V|}$ is a bounded operator from $L^2$ to $L^r$ with $r=2p/(p+1)$. 
By duality, multiplication by $\sqrt{|V|}$ is a bounded operator from $L^{r'}$ to $L^2$. 
Thus, the bound from Lemma \ref{key} implies the bound in Lemma \ref{lap} for $|\im z|\leq 1$. 
The fact that the bound in Lemma \ref{key} holds also for $|\im z|>1$ follows from the form-boundedness 
of $V$ with respect to $H$, which implies that
$$
\left\| \sqrt{|V|} R(z) \sqrt{|V|} \right\| \leq \left\| \sqrt{|V|} R(-E)^{1/2}\right\|^2  \left\| (H+E) R(z)\right\|
\leq C
$$
for some large $E>0$ and all $|\im z|\geq 1$.

For the proof of Lemma \ref{key} we distinguish the cases $d=1$ and $d\geq 2$.

\begin{proof}[Proof of Lemma \ref{key} for $d=1$]
We write $z=k^2$ with $\im k \geq 0$. Let $\theta_\pm(\cdot,k)$ be the Jost solutions, that is, solutions of $-\theta'' + V\theta = k^2 \theta$ on $\R$ with
$$
\theta_\pm(x,k) \sim e^{\pm ikx}
\qquad\text{as}\ x\to\pm\infty \,.
$$
In terms of these functions the resolvent kernel is given by
$$
R(k^2)(x,x') = \frac{\theta_+(x_>,k)\theta_-(x_<,k)}{w(k)} \,,
$$
where $x_>=\max\{x,x'\}$, $x_<=\min\{x,x'\}$ and
$$
w(k) = \theta_-'(x,k) \theta_+(x,k) - \theta_-(x,k) \theta_+'(x,k) \,
$$
is the Wronskian.
The quantity we need to bound is
$$
\| R(k^2) \|_{L^1\to L^\infty} = \sup_{x,x'} |R(k^2)(x,x')| \,.
$$
It is well known (see e.g. \cite{Ya}) that the functions $e^{\mp ikx} \theta_\pm(x,k)$ are 
uniformly bounded for $x\in\bbR$ and $k$ bounded away from zero; 
also the Wronskian is a continuous function  without
zeros on $(0,\infty)$. This gives the required result for $d=1$.
\end{proof}

The case $d\geq 2$ is more complicated and relies on harmonic analysis results in \cite{IoSc} and \cite{KoTa}. We also note that the bound of the lemma in the case $V\equiv 0$ is due to \cite{KeRuSo} (see also \cite{Fr} for the case $d=2$).

\begin{proof}
[Proof of Lemma \ref{key} for $d\geq 2$]
We aim at applying the results of \cite{IoSc}. Let us show that our operator fits in their framework. (This is claimed without proof in their paper, but for the sake of completeness we provide a short argument.) Let $q_0=p$ if $d=2$ and $q_0=d/2$ if $d\geq 3$ and consider
$$
M(\x) = \left( \int_{|\x'-\x|\leq 1/2} |V(\x')|^{q_0} \,d\x' \right)^{1/q_0} \,.  
$$
Then, by H\"older, since $p\geq q_0$,
$$
M(\x) \leq \left( \omega_d 2^{-d} \right)^{(p-q_0)/(p q_0)} \left( \int_{|\x'-\x|\leq 1/2} |V(\x')|^p \,d\x' \right)^{1/p} \,,
$$
and therefore, by Minkowski, since $p\leq (d+1)/2$,
$$
\int_{\R^d} M(\x)^{(d+1)/2} \,d\x \leq \left( \omega_d 2^{-d} \right)^{1+(p-q_0)(d+1)/(2pq_0)} \|V\|_{L^p}^{(d+1)/2} <\infty \,.
$$
According to \cite[Prop. 1.4]{IoSc} this means that $V$ is an admissible perturbation in the sense of \cite{IoSc}.

Therefore, \cite[Thm. 1.3]{IoSc} states that for a certain Banach space $X$ 
and for any compact interval $\delta\subset(0,\infty)$, which does not contain eigenvalues of $H$,
\begin{equation}
\label{eq:is}
\sup_{\re z\in \delta,\,\,   0<|\im z|\leq 1} \| R(z) \|_{X\to X'} <\infty \,.
\end{equation}
The Banach space $X$ satisfies $W^{-1/(d+1),2(d+1)/(d+3)}(\R^d)\subset X$ (continuously). (Here we use the notation $W^{s,p}(\R^d)$ for the Sobolev space of order $s$ with integrability index $p$; see \cite{Triebel}.) We now apply the Sobolev embedding theorem which implies that there is a constant $C_{d,r}$ such that
\begin{equation}
\label{eq:sob}
\| u \|_{L^{r'}} \leq C_{d,r} \|u\|_{W^{1/(d+1),2(d+1)/(d-1)}} \,.
\end{equation}
(Here we use the fact that $p>1$ in $d=2$, which implies that $r'<\infty$, and that $p\geq d/2$ in $d\geq 3$, which implies that $r'\leq 2d/(d-2)$.) By duality, \eqref{eq:sob} yields
\begin{equation}
\label{eq:sobdual}
\|u\|_{W^{-1/(d+1),2(d+1)/(d+3)}} \leq C_{d,r} \|u\|_{L^r}
\end{equation}
and, therefore, \eqref{eq:is} implies
$$
\sup_{\re z\in \delta, 0<|\im z|\leq 1} \| R(z) \|_{L^r\to L^{r'}} <\infty
$$
for any compact interval $\delta\subset(0,\infty)$, which does not contain eigenvalues of $H$.

We now apply the main result of \cite{KoTa}, which states that the operator $H$ has no positive eigenvalues if $V\in L^p(\R^d)$ with $p$ as in the statement of the lemma. Again, this is stated in their paper, but the general condition in the paper is not explicitly verified, so we give a quick argument. Consider the norm
$$
\|U\|_X = \sup_{\phi\in C_0^\infty} \frac{\|U\phi\|_{W^{-s,p}}}{\|\phi\|_{W^{s,p'}}}
$$
where $s=1/3-\epsilon$ if $d=2$ and $s=1/(d+1)$ if $d\geq 3$ and where $p$ is as in the statement of the lemma. The assumption of \cite{KoTa} is satisfied if
$$
\lim_{j\to\infty} \|V \1_{\{2^{j-1}<|x|<2^{j+1}\}} \|_X = 0 \,.
$$
To verify this condition we use again \eqref{eq:sob} and \eqref{eq:sobdual} which, by H\"older, imply that
$$
\|U\|_X \leq C_{d,r}^2 \sup_{\phi\in C_0^\infty} \frac{\|U\phi\|_{L^r}}{\|\phi\|_{L^{r'}}} \leq C_{d,r}^2 \|U\|_{L^p} \,.
$$
(If $d=2$, we have to replace \eqref{eq:sob} by $\|u\|_{L^{r'}} \leq \|u\|_{W^s_{6/5}}$, which holds for $\epsilon$ small enough depending on $r$.) Thus, the assumption $V\in L^p(\R^d)$ implies that
$$
\lim_{j\to\infty} \|V \1_{\{2^{j-1}<|\x|<2^{j+1}\}} \|_X \leq C_{d,r}^2 \lim_{j\to\infty} \|V \1_{\{2^{j-1}<|\x|<2^{j+1}\}} \|_{L^p} = 0 \,,
$$
as required. This concludes the proof of Lemma \ref{key} for $d\geq 2$.
\end{proof}


\section{Proof of Theorem~\ref{lt} and Corollary~\ref{lt_cor}}

In order to deduce Theorem~\ref{lt} from the results of \cite{FLLS} we need the following abstract lemma. 
We write $P^\bot = 1-P$ if $P$ is a projection.

\begin{lemma}\label{tracelemma}
Let $A$ and $B$ be self-adjoint operators and denote $P=\1_{\{A<0\}}$ and $Q = \1_{\{B<0\}}$. Assume that $P(B-A)P\in\mathbf S_1$. Then $P(B_--A_-)P\in\mathbf S_1$, $P^\bot(B_--A_-)P^\bot \in\mathbf S_1$ iff $(Q-P) |B|^{1/2}\in\mathbf S_2$, and in this case
$$
\tr P(B_-- A_-)P + \tr P^\bot(B_-- A_-)P^\bot + \tr P(B-A)P = \left\| (Q-P) |B|^{1/2} \right\|_{\mathbf S_2}^2 \,.
$$
\end{lemma}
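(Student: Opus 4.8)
The plan is to reduce everything to an algebraic identity between the pieces of $B_-$, $A_-$, and $B-A$ that live in the four ``corners'' determined by the projections $P$ and $Q$, and then to recognize the cross term as $\|(Q-P)|B|^{1/2}\|_{\Sch_2}^2$. The starting point is the elementary operator identities $A_- = -PA = -PAP$ (since $P=\1_{\{A<0\}}$ commutes with $A$) and similarly $B_- = -QB = -QBQ$. First I would write
\begin{equation*}
B_- - A_- = -QBQ + PAP = -QBQ + PBP - P(B-A)P \,.
\end{equation*}
The term $P(B-A)P$ is trace class by hypothesis, so it contributes $\tr P(B-A)P$ and may be set aside. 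Thus the real content is to analyze $-QBQ + PBP$, and in particular its compressions $P(-QBQ+PBP)P$ and $P^\bot(-QBQ+PBP)P^\bot$, using only the operator $B$ and the two spectral projections $P,Q$.

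Next I would insert $1 = Q + Q^\bot$ around $B$ inside $PBP$: writing $E = Q - P$ (a self-adjoint operator), one has $P = Q - E$, and expanding
\begin{equation*}
PBP = (Q-E)B(Q-E) = QBQ - QBE - EBQ + EBE \,.
\end{equation*}
Hence $-QBQ + PBP = -QBE - EBQ + EBE$. Now I use that $QBQ = -|B|Q = -Q|B|Q \le 0$ and $Q^\bot B Q^\bot = B_+ \ge 0$, together with $QBQ^\bot = Q^\bot B Q = 0$ (since $Q$ commutes with $B$), to rewrite $QBE$ and $EBE$ in terms of $|B|^{1/2}$. The clean way is to note $EBE = EQBQE + EQ^\bot B Q^\bot E = -\,E|B|^{1/2}Q|B|^{1/2}E + E|B|^{1/2}Q^\bot|B|^{1/2}E$, and similarly handle the $QBE$, $EBQ$ terms; the point is that every surviving contribution contains a factor $|B|^{1/2}E = |B|^{1/2}(Q-P)$. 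So $P(-QBQ+PBP)P$ and $P^\bot(-QBQ+PBP)P^\bot$ are trace class precisely when $|B|^{1/2}(Q-P)\in\Sch_2$, and taking the trace of the sum of the two compressions, the cross terms telescope. Concretely, since $\tr P M P + \tr P^\bot M P^\bot = \tr M$ whenever the terms make sense and $M = -QBE-EBQ+EBE$ here, one computes $\tr M = \tr(EBE) - 2\tr(QBE) = \ldots$; a short rearrangement (using $Q = P + E$ once more, and $\tr(ABA) = \tr(A^2 B)$ type cyclicity with the $\Sch_2$ factors) collapses this to $\tr\big(E|B|E\big) = \|\,|B|^{1/2}E\,\|_{\Sch_2}^2$. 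That gives exactly
\begin{equation*}
\tr P(B_- - A_-)P + \tr P^\bot(B_- - A_-)P^\bot + \tr P(B-A)P = \|(Q-P)|B|^{1/2}\|_{\Sch_2}^2 \,.
\end{equation*}

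The main obstacle I anticipate is bookkeeping of trace-class versus merely-defined quantities: none of $QBQ$, $PBP$, $EBE$ is trace class on its own, only certain compressions and combinations are, so one must be careful to only take traces of genuinely $\Sch_1$ operators and to justify each use of cyclicity of the trace (each such use should be arranged so that at least two factors are Hilbert--Schmidt, e.g. $|B|^{1/2}E \in \Sch_2$). A secondary technical point is domains: $A$, $B$ are possibly unbounded, so $PAP = -A_-$ etc. should be read in the sense that $A_-$ is a bounded positive operator and $P$ commutes with it, and all the algebra above involves only the bounded operators $A_-, B_-, B-A$ (trace class by assumption), $P$, $Q$, $E$, and $|B|^{1/2}$ — so once the identity is written at the level of bounded operators, unboundedness of $A$ and $B$ plays no role. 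I would also spell out the equivalence ``$P^\bot(B_--A_-)P^\bot\in\Sch_1$ iff $(Q-P)|B|^{1/2}\in\Sch_2$'': the ``if'' direction follows from the expansion above, and the ``only if'' direction follows because, granted $P(B-A)P\in\Sch_1$ (hence $P(B_--A_-)P\in\Sch_1$ is automatic once one of the two is), the $P^\bot$-compression of $-QBE-EBQ+EBE$ being trace class forces the leading piece $E|B|^{1/2}Q^\bot|B|^{1/2}E$'s $P^\bot$-part, and thence $|B|^{1/2}E$, into $\Sch_2$.
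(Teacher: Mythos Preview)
Your approach heads in a workable direction but is considerably more convoluted than the paper's, and the sketch has real gaps. The paper starts from the \emph{right}-hand side and uses the single identity $(Q-P)^2 = QP^\bot Q + Q^\bot P Q^\bot$, which gives at once
\[
\|(Q-P)|B|^{1/2}\|_{\Sch_2}^2 \;=\; \tr B_-^{1/2}P^\bot B_-^{1/2} \;+\; \tr B_+^{1/2}P B_+^{1/2}\,.
\]
Both summands are non-negative, so the $\Sch_2$ condition is equivalent to $P^\bot B_- P^\bot\in\Sch_1$ \emph{and} $PB_+P\in\Sch_1$; these are rewritten as $P^\bot(B_--A_-)P^\bot$ and $P(B_--A_-)P + P(B-A)P$ respectively (using $P^\bot A_-=0$, $B_+=B_-+B$, and $PAP=-A_-$). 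That yields both the equivalence and the trace formula, and no unbounded object such as $PBP$ or $EBE$ is ever manipulated, nor is any trace split into non-$\Sch_1$ pieces.

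In your route, the step ``$\tr M = \tr(EBE)-2\tr(QBE)$'' is precisely a splitting of a trace-class operator into pieces that are not separately trace class (for instance $QBE=-B_-(Q-P)$ contains $B_-$, which need not lie in $\Sch_1$), so the subsequent cyclicity moves are exactly the kind of unjustified step you yourself flag as the main obstacle; your sketch does not show how to avoid it. The formal algebra does collapse to $\tr(E|B|E)$, but making this rigorous essentially forces you back to the paper's two non-negative pieces. There is also a misreading of the ``iff'': the inclusion $P(B_--A_-)P\in\Sch_1$ is \emph{not} automatic from $P(B-A)P\in\Sch_1$ alone --- given that hypothesis it is equivalent to $PB_+P\in\Sch_1$ --- and the lemma asserts that \emph{both} diagonal compressions are $\Sch_1$ iff $(Q-P)|B|^{1/2}\in\Sch_2$. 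Your ``only if'' argument, which invokes only the $P^\bot$-compression, cannot recover the $B_+^{1/2}P\in\Sch_2$ half of that condition.
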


\begin{proof}
We first observe that $(Q-P)^2 = QP^\bot Q + Q^\bot P Q^\bot$. Thus,
\begin{align*}
\left\| (Q-P) |B|^{1/2} \right\|_{\mathbf S_2}^2
& = \tr |B|^{1/2} \left( QP^\bot Q + Q^\bot P Q^\bot \right) |B|^{1/2} \\
& = \tr \left( B_-^{1/2} P^\bot B_-^{1/2} + B_+^{1/2} P B_+^{1/2} \right) \,.
\end{align*}
Since both $B_-^{1/2}P^\bot B_-^{1/2}$ and $B_+^{1/2} P B_+^{1/2}$ are non-negative operators, we have $(Q-P) |B|^{1/2}\in\mathbf S_2$ iff $B_-^{1/2} P^\bot B_-^{1/2}\in\mathbf S_1$ and $B_+^{1/2} P B_+^{1/2}\in\mathbf S_1$. This is equivalent to having $P^\bot B_- P^\bot\in\mathbf S_1$ and $P B_+ P\in\mathbf S_1$, and in this case
\begin{equation}
\label{eq:cyclicity}
\tr B_-^{1/2} P^\bot B_-^{1/2} = \tr P^\bot B_- P^\bot \,,
\qquad
\tr B_+^{1/2} P B_+^{1/2} = \tr P B_+ P \,.
\end{equation}
Now we note that
$$
P^\bot B_- P^\bot = P^\bot (B_--A_-) P^\bot
$$
and that
\begin{align*}
P B_+ P = P \left(B_- + B \right) P = P \left(B_- + A + \left(B-A\right) \right) P = P \left(B_- - A_- + \left(B-A\right) \right) P \,.
\end{align*}
This, together with \eqref{eq:cyclicity} proves the lemma.
\end{proof}

\begin{proof}[Proof of Theorem~\ref{lt}]
We want to apply Lemma \ref{tracelemma} with $A=-\Delta-\mu$ and $B=-\Delta+V-\mu$. Define $P$ and $Q$ as in that lemma. 
By Theorem \ref{main}, we have $B_--A_-\in\mathbf S_1$ under our hypothesis. 
Thus, in particular, $P(B_--A_-)P\in\mathbf S_1$ and $P^\bot (B_--A_-)P^\bot\in\mathbf S_1$ and
$$
\tr P(B_-- A_-)P + \tr P^\bot(B_-- A_-)P^\bot = \tr\left( B_- - A_-\right) \,.
$$ 
Moreover, since $V\in L^1(\R^d)$ we have $\sqrt{|V|} P \in\mathbf S_2$ and therefore $P(B-A)P\in\mathbf S_1$ with
$$
\tr P(B-A)P = L_{0,d}^{\mathrm sc}\ \mu_+^{d/2} \int_{\R^d} V \,dx \,.
$$
Thus, we deduce from Lemma \ref{tracelemma} that
\begin{align*}
\tr \left( \left(-\Delta+V-\mu\right)_- - \left(-\Delta-\mu\right)_- \right) + L_{0,d}^{\mathrm sc}\ \mu_+^{d/2} \int_{\R^d} V \,dx  = \left\| (Q-P) |B|^{1/2} \right\|_{\mathbf S_2}^2 \,.
\end{align*}
Obviously, the right side is non-negative. 
Moreover, the bound on the right side from \cite{FLLS} yields Theorem \ref{lt}.
\end{proof}

\begin{proof}[Proof of Corollary~\ref{lt_cor}]
This is easy to obtain by a modification of an argument of Aizenman--Lieb \cite{AL}. 
Indeed, by the functional calculus
$$
(-\Delta + V - \mu)_-^\gamma = \gamma(\gamma-1) \int_0^\infty (-\Delta + V - \mu+\tau)_- \tau^{\gamma-2}\,d\tau
$$
and similarly for $(-\Delta-\mu)_-^\gamma$. Theorem \ref{main}, together with the assumptions on $V$, allows to interchange taking the trace and integrating with respect to $\tau$, and we obtain
\begin{align*}
& \tr\left( \left(-\Delta+V-\mu\right)_-^\gamma - \left(-\Delta -\mu \right)_-^\gamma \right) \\
&  \qquad= \gamma (\gamma-1) \int_0^\infty \tr\left( \left(-\Delta+V-\mu+\tau \right)_- - \left(-\Delta -\mu+\tau \right)_- \right) \tau^{\gamma-2}\,d\tau \,. 
\end{align*}
Moreover, a simple computation shows that
$$
L_{\gamma-1,d}^{\mathrm{sc}} \ \mu_+^{\gamma+d/2-1}
= \gamma(\gamma-1) \int_0^\infty L_{0,d}^{\mathrm{sc}}\, (\mu -\tau)_+^{d/2} \tau^{\gamma-2}\,d\tau \,.
$$
Thus,
\begin{align*}
& \tr\left( \left(-\Delta+V-\mu\right)_-^\gamma - \left(-\Delta -\mu \right)_-^\gamma \right) 
+ L_{\gamma-1,d}^{\mathrm{sc}} \ \mu_+^{\gamma+d/2-1} \int_{\R^d} V \,dx \\
& \qquad = \gamma(\gamma-1) \int_0^\infty \left(
\tr\left( \left(-\Delta+V-\mu+\tau \right)_- - \left(-\Delta -\mu +\tau \right)_- \right) \phantom{\int} \right. \\
& \qquad\qquad \qquad\qquad\qquad  \left. + L_{0,d}^{\mathrm{sc}} \ (\mu-\tau)_+^{d/2} \int_{\R^d} V \,dx \right) \tau^{\gamma-2} \,d\tau
\end{align*}
We now apply Theorem~\ref{lt} for every $\tau>0$. Observing that
\begin{align*}
& \int_0^\infty \int_{\R^d} \left( \left(V-\mu+\tau\right)_-^{1+d/2} - (\mu-\tau)_+^{1+d/2} + \left(1+\frac d2\right) (\mu-\tau)_+^{d/2} V \right) dx\,\tau^{\gamma-2} \,d\tau \\
& \qquad = \frac{\Gamma(\frac d2+2)\,\Gamma(\gamma-1)}{\Gamma(\gamma+\frac d2+1)} \int_{\R^d} \left( \left(V-\mu\right)_-^{\gamma+\frac d2} - \mu_+^{\gamma+\frac d2} + \left(\gamma+\frac d2\right) \mu_+^{\gamma+d/2-1} V \right) dx \,,
\end{align*}
we obtain the claimed inequality.
\end{proof}

\subsection*{Acknowledgments} The first author is grateful to M. Lewin, J. Sabin and D. Yafaev for useful discussions. Financial support from the U.S.~National Science Foundation through grant PHY-1347399 (R. F.) is acknowledged.


\bibliographystyle{amsalpha}

\end{document}